\documentclass[12pt,reqno,oneside]{amsproc} \title[Exponential Decay]{Exponential Decay of Solutions to a Fluid-Plate Model with Small Initial Data}

\author[M.~Bukal]{Mario Bukal}
\address{University of Zagreb Faculty of Electrical Engineering and Computing, Unska 3, 10000 Zagreb, Croatia}
\email{mario.bukal@fer.hr}

\author[I.~Kukavica]{Igor Kukavica}
\address{Department of Mathematics, University of Southern California, Los Angeles, CA 90089}
\email{kukavica@usc.edu}

\author[L.~Li]{Linfeng Li}
\address{Department of Mathematics, University of California Los Angeles, Los Angeles, CA 90095}
\email{lli265@math.ucla.edu}

\author[B.~Muha]{Boris Muha}
\address{University of Zagreb, Faculty of Science, Department of Mathematics, Bijeni\v cka cesta 30, 10000 Zagreb, Croatia}
\email{borism@math.hr}

\usepackage[normalem]{ulem}
\usepackage{enumitem}
\usepackage{bbm}
\usepackage{fancyhdr}
\usepackage{comment}
\usepackage{tikz}
\usetikzlibrary{arrows.meta}
\chardef\forshowkeys=0
\chardef\refcheck=0
\chardef\showllabel=0
\chardef\sketches=0
\chardef\showcolors=1
\usepackage{bbm,yfonts}
\usepackage{upgreek}
\usepackage{enumitem}
\usepackage{datetime}
\usepackage{esint}
\usepackage{fancyhdr}
\usepackage{comment}
\usepackage[utf8]{inputenc}
\allowdisplaybreaks

\let\pa\partial   
  
\let\eps\epsilon  

\newcommand{\N}{{\mathbb N}}    
\newcommand{\R}{{\mathbb R}}  

\newcommand{\diver}{\operatorname{div}}

\newcommand{\dist}{\operatorname{dist}}

\newcommand{\D}{\operatorname{D}}

\newcommand{\red}{\textcolor{red}}

\ifnum\forshowkeys=1

\usepackage[notcite,color]{showkeys}
\fi

\usepackage[margin=1in]{geometry}
\usepackage{amsmath, amsthm, amssymb}
\usepackage{times}
\usepackage{graphicx}
\usepackage{marginnote}
\usepackage[unicode,breaklinks=true,colorlinks=true,linkcolor=blue,urlcolor=blue,citecolor=blue]{hyperref}
\usepackage[most]{tcolorbox}

\usepackage{tikz}

\ifnum\refcheck=1
\usepackage{refcheck}
\fi

\begin{document}
%varmac (various macros)
\def\Omegaone{\text{1-channel}}
\def\ch{{\hat{c}}}
\def\bp{\bar{\partial}}
\def\bbeta{\beta}
\def\aalpha{\mu}
\def\etah{\eta}
\def\uh{u}
\def\vh{v}
\def\qh{q}
\def\ph{p}
\def\inp{{\colr IN PROGRESS}}
\def\rth{{\colr REVISED UP TO HERE}}
\def\fNS{f}
\def\fP{g}
\def\bnew{\colr {\bf }}
\def\enew{\colb {}}
\def\bold{\colu {\bf }}
\def\eold{\colb {}}
\def\inte{\int_{\Omega}}
\def\intb{\int_{\Gamma}}
\def\F{F}
\def\Q{q_{\text{osc}}}
\def\M{M}
\def\D{S}
\def\L{\mathcal{L}}
\def\E{\mathcal{E}}
\def\DD{\mathcal{D}}
\def\N{\mathcal{N}}
\def\C{\mathcal{C}}
\def\etaa{h}
\def\CommT{{R}}
\def\BndT{B}

\def\XX{X}
\def\YY{Y}
\def\ZZZ{Z}

\def\intint{\int\!\!\!\!\int}
\def\OO{\mathcal O}
\def\SS{\mathbb S}
\def\CC{\mathbb C}
\def\RR{\mathbb R}
\def\tt{\mathbb{T}^2}
\def\ZZ{\mathbb Z}
\def\HH{\mathbb H}
\def\RSZ{\mathcal R}
\def\LL{\mathcal L}
\def\SL{\LL^1}
\def\ZL{\LL^\infty}
\def\GG{\mathcal G}
\def\erf{\mathrm{Erf}}
\def\mgt#1{\textcolor{magenta}{#1}}
\def\ff{\rho}
\def\gg{G}
\def\sqrtnu{\sqrt{\nu}}
\def\ww{w}
\def\ft#1{#1_\xi}
\def\lec{\lesssim}
	\def\les{\lesssim}
\def\gec{\gtrsim}
\renewcommand*{\Re}{\ensuremath{\mathrm{{\mathbb R}e\,}}}
\renewcommand*{\Im}{\ensuremath{\mathrm{{\mathbb I}m\,}}}

\ifnum\showllabel=1
\def\llabel#1{\marginnote{\color{lightgray}\rm\small(#1)}[-0.0cm]\notag}
\else
\def\llabel#1{\notag}
\fi

\newcommand{\norm}[1]{\left\|#1\right\|}
\newcommand{\nnorm}[1]{\lVert #1\rVert}
\newcommand{\abs}[1]{\left|#1\right|}
\newcommand{\NORM}[1]{|\!|\!| #1|\!|\!|}

\newtheorem{Theorem}{Theorem}[section]
\newtheorem{Corollary}[Theorem]{Corollary}
\newtheorem{Definition}[Theorem]{Definition}
\newtheorem{Proposition}[Theorem]{Proposition}
\newtheorem{Lemma}[Theorem]{Lemma}
\newtheorem{Remark}[Theorem]{Remark}

\def\theequation{\thesection.\arabic{equation}}
\numberwithin{equation}{section}

\def\theequation{\thesection.\arabic{equation}}
\numberwithin{equation}{section}

\definecolor{mygray}{rgb}{.6,.6,.6}
\definecolor{myblue}{rgb}{9, 0, 1}
\definecolor{colorforkeys}{rgb}{1.0,0.0,0.0}

\newlength\mytemplen
\newsavebox\mytempbox
\def\weaks{\text{\,\,\,\,\,\,weakly-* in }}
\def\weak{\text{\,\,\,\,\,\,weakly in }}
\def\inn{\text{\,\,\,\,\,\,in }}
\def\cof{\mathop{\rm cof\,}\nolimits}
\def\Id{\mathop{\rm Id\,}\nolimits}
\def\Dn{\frac{\partial}{\partial N}}
\def\Dnn#1{\frac{\partial #1}{\partial N}}
\def\tdb{\tilde{b}}
\def\tda{b}
\def\qqq{u}
\def\lat{\Delta_{\bx}}
\def\biglinem{\vskip0.5truecm\par==========================\par\vskip0.5truecm}

\def\inon#1{\hbox{\ \ \ \ \ \ \ }\hbox{#1}}               
\def\onon#1{\inon{on~$#1$}}
\def\inin#1{\inon{in~$#1$}}

\def\FF{F}
\def\andand{\text{\indeq and\indeq}}
\def\ww{w(y)}
\def\ll{{\color{red}\ell}}
\def\ee{\epsilon_0}
\def\startnewsection#1#2{ \section{#1}\label{#2}\setcounter{equation}{0}}   
\def\nnewpage{ }
\def\sgn{\mathop{\rm sgn\,}\nolimits}    
\def\Tr{\mathop{\rm Tr}\nolimits}    
\def\div{\mathop{\rm div}\nolimits}
\def\curl{\mathop{\rm curl}\nolimits}
\def\dist{\mathop{\rm dist}\nolimits}  
\def\supp{\mathop{\rm supp}\nolimits}
\def\indeq{\quad{}}           
\def\period{.}                       
\def\semicolon{\,;}
\def\pa{\partial}            
\def\pt{\partial_t}                

\ifnum\showcolors=1
\def\colr{\color{red}}
\def\colc{\color{cyan}}
\def\colrr{\color{black}}
\def\colb{\color{black}}
\definecolor{colorgggg}{rgb}{0.1,0.5,0.3}
\definecolor{colorllll}{rgb}{0.0,0.7,0.0}
\definecolor{colorhhhh}{rgb}{0.3,0.75,0.4}
\definecolor{colorpppp}{rgb}{0.7,0.0,0.2}
\definecolor{coloroooo}{rgb}{0.45,0.0,0.0}
\definecolor{colorqqqq}{rgb}{0.1,0.7,0}
\def\coly{\color{lightgray}}
\def\colg{\color{colorgggg}}
\def\collg{\color{colorllll}}
\def\cole{\color{coloroooo}}
\def\coll{\color{colorqqqq}}
\def\coleo{\color{colorpppp}}
\def\colu{\color{blue}}
\def\colc{\color{colorhhhh}}
\def\colW{\colb}   
\definecolor{coloraaaa}{rgb}{0.6,0.6,0.6}
\def\colw{\color{coloraaaa}}
\else
\def\colr{\color{black}}
\def\colrr{\color{black}}
\def\colb{\color{black}}
\def\coly{\color{black}}
\def\colg{\color{black}}
\def\collg{\color{black}}
\def\cole{\color{black}}
\def\coleo{\color{black}}
\def\colu{\color{black}}
\def\colc{\color{black}}
\def\colW{\color{black}}
\def\colw{\color{black}}
\fi

\def\comma{ {\rm ,\qquad{}} }            
\def\commaone{ {\rm ,\quad{}} }          
\def\nts#1{{\color{red}\hbox{\bf ~#1~}}} 
\def\ntsf#1{\footnote{\color{colorgggg}\hbox{#1}}} 
\def\blackdot{{\color{red}{\hskip-.0truecm\rule[-1mm]{4mm}{4mm}\hskip.2truecm}}\hskip-.3truecm}
\def\bluedot{{\color{blue}{\hskip-.0truecm\rule[-1mm]{4mm}{4mm}\hskip.2truecm}}\hskip-.3truecm}
\def\purpledot{{\color{colorpppp}{\hskip-.0truecm\rule[-1mm]{4mm}{4mm}\hskip.2truecm}}\hskip-.3truecm}
\def\greendot{{\color{colorgggg}{\hskip-.0truecm\rule[-1mm]{4mm}{4mm}\hskip.2truecm}}\hskip-.3truecm}
\def\cyandot{{\color{cyan}{\hskip-.0truecm\rule[-1mm]{4mm}{4mm}\hskip.2truecm}}\hskip-.3truecm}
\def\reddot{{\color{red}{\hskip-.0truecm\rule[-1mm]{4mm}{4mm}\hskip.2truecm}}\hskip-.3truecm}

\def\tdot{{\color{green}{\hskip-.0truecm\rule[-.5mm]{3mm}{3mm}\hskip.2truecm}}\hskip-.1truecm}
\def\gdot{\greendot}
\def\bdot{\bluedot}
\def\ydot{\cyandot}
\def\rdot{\cyandot}
\def\fractext#1#2{{#1}/{#2}}
\def\ii{\hat\imath}
\def\boris#1{\textcolor{blue}{#1}}
\def\vlad#1{\textcolor{cyan}{#1}}
\def\igor#1{\text{{\textcolor{colorqqqq}{#1}}}}
\def\linfeng#1{\textbf{\textcolor{colorqqqq}{LL:#1}}}
\def\igorf#1{\footnote{\text{{\textcolor{colorqqqq}{#1}}}}}

\newcommand{\uvro}{\upvarrho}
\newcommand{\bv}{\boldsymbol v}
\newcommand{\bV}{\boldsymbol V}
\newcommand{\bu}{\boldsymbol u}
\newcommand{\bs}{\boldsymbol}
\newcommand{\bx}{x}
\newcommand{\obx}{\overline{\boldsymbol x}}
\newcommand{\bw}{w}
\newcommand{\bn}{\mathbf n}
\newcommand{\by}{\boldsymbol y}
\newcommand{\bz}{\boldsymbol z}
\newcommand{\bef}{\boldsymbol f}
\newcommand{\material}{\Omega_{\eps,h}}
\newcommand{\vol}{\operatorname{vol}}
\newcommand{\bog}{{\rm Ext}_{\diver}}

\def\AA{Y}
\newcommand{\p}{\partial}
\newcommand{\UE}{U^{\rm E}}
\newcommand{\PE}{P^{\rm E}}
\newcommand{\KP}{K_{\rm P}}
\newcommand{\uNS}{u^{\rm NS}}
\newcommand{\vNS}{v^{\rm NS}}
\newcommand{\pNS}{p^{\rm NS}}
\newcommand{\omegaNS}{\omega^{\rm NS}}
\newcommand{\uE}{u^{\rm E}}
\newcommand{\vE}{v^{\rm E}}
\newcommand{\pE}{p^{\rm E}}
\newcommand{\omegaE}{\omega^{\rm E}}
\newcommand{\ua}{u_{\rm   a}}
\newcommand{\omegaa}{\omega_{\rm   a}}
\newcommand{\ue}{u_{\rm   e}}
\newcommand{\ve}{v_{\rm   e}}
\newcommand{\omegae}{\omega_{\rm e}}
\newcommand{\omegaeic}{\omega_{{\rm e}0}}

\def\mario#1{\textcolor{cyan}{#1}}
\def\red#1{\textcolor{blue}{#1}}

\begin{abstract}
We consider a three-dimensional fluid-structure interaction problem coupling the incompressible Navier-Stokes equations in a time-dependent domain with a square-root damped plate equation, which is posed on the moving upper boundary of the fluid. 
We prove, a~priori, the exponential decay of strong solutions for initial data that are sufficiently small in a suitable Sobolev space.
The proof combines higher-order energy estimates, Stokes-type regularity bounds, and a nonlinear bootstrap scheme that closes under the smallness assumption.
\end{abstract}

\keywords{Navier-Stokes equations, fluid-plate interaction, exponential decay}
\maketitle

\setcounter{tocdepth}{1}

\colb
\section{Introduction}
\label{secintro}

Fluid-structure interaction problems involving elastic structures have been studied extensively in recent years. For viscous incompressible fluids interacting with plates, shells, or beams, the existence of weak solutions has been established in a variety of settings.
Chambolle, Desjardins, Esteban, and Grandmont~\cite{CDEG} proved the existence of weak solutions for an incompressible viscous fluid coupled to a viscoelastic plate, while Grandmont~\cite{Gr1} subsequently extended this result to an elastic plate. Weak-solution theories were later developed for more general geometries and structural models. Lengeler and R\r{u}\v{z}i\v{c}ka~\cite{LR} considered a three-dimensional fluid interacting with a linearly elastic Koiter shell of general geometry. Fluid--structure models in cylindrical geometries, with elastic or viscoelastic walls in both two- and three-dimensional settings, were studied in a series of works by Muha and \v{C}ani\'c; see, for example,~\cite{MC1,MC2}. Muha and Schwarzacher~\cite{MS} treated a three-dimensional Navier--Stokes fluid interacting with a nonlinear Koiter shell and obtained, in addition, improved regularity of the shell displacement due to fluid dissipation. Trifunovi\'c and Wang~\cite{TW} considered a three-dimensional incompressible fluid coupled to nonlinear plate models encompassing the generalized Kirchhoff, von K\'arm\'an, and Berger cases. In two dimensions, Casanova, Grandmont, and Hillairet~\cite{CGH} established an existence theory for a fluid--beam system that allows contact. We refer to the works cited above and the references therein for further results on weak solutions for fluid--structure interaction problems.

Recent work on fluid-plate models has paid particular attention to no-contact results and geometric
nondegeneracy, since they are essential to the physical relevance of these models in certain biological and engineering applications.  Breit and Roy \cite{BR} proved a conditional no-contact result for a two-dimensional
compressible fluid coupled to an elastic plate. In the three-dimensional incompressible setting \cite{BKLM1}, very
close to the present paper, the authors showed uniform separation
of the plate from the rigid bottom under the natural energy bounds and additional regularity
assumptions. By ruling out the contact under a broad condition on the initial data \cite{BKLM2}, the authors then established the global existence of weak solutions. 
A particular aspect of fluid-plate coupling is described by reduced models, which are favorable in engineering applications. Bukal and Muha \cite{BM} provided a rigorous link between a two-dimensional fluid-structure system and a nonlinear sixth-order thin-film equation as an elastohydrodynamic model that is more tractable both analytically and numerically. 

Compared to weak solutions, global strong solutions for fluid-elastic structure systems are more delicate to construct, and their existence has typically been established either locally in time for general data or globally in time under suitable smallness assumptions. In~\cite{Be}, Beir\~ao da~Veiga proved the existence of strong solutions for a coupled fluid-structure evolution problem. In~\cite{Le1,Le2}, Lequeurre established local-in-time existence of strong solutions for arbitrary initial data and global-in-time existence for small initial data for closely related fluid-structure interaction systems, in both two- and three-dimensional settings. We note, however, that in these global strong-solution results the smallness threshold depends on the prescribed time interval.
In two dimensions, Grandmont and Hillairet established in \cite{GH} global strong solutions for a viscoelastic beam-fluid interaction system. In three dimensions, Djebour and Takahashi \cite{DjTa} obtained the local in time existence and uniqueness of strong solutions to a closely related fluid structure interaction problem, but with Navier boundary conditions. Moreover, they show that these solutions are global in time provided small initial data and assuming the presence of frictions in the boundary conditions.

For a related fluid--structure model of a damped elastic body interacting with an incompressible fluid, Ignatova, Kukavica, Lasiecka, and Tuffaha first established a priori estimates for local existence and uniqueness~\cite{IKLT1}, and later proved small-data global existence and exponential decay~\cite{IKLT2}. These results have been improved by Kukavica and O\.za\'nski~\cite{KO1,KO2} by removing stabilization terms from the model and proving an exponential decay using only structural damping and viscous fluid dissipation, treating both attached and fully immersed elastic bodies.
Other decay mechanisms in free-boundary settings have been explored in \cite{QGY}, where the authors obtained global smooth solutions and exponential energy decay for a nonlinear elastic interface model with boundary dissipation; in \cite{Co}, the author proved global existence near the equilibrium
and convergence to a flat-interface state for a Navier-Stokes fluid coupled to an undamped linear
wave equation; and in \cite{IKO}, where the authors showed that a vacuum bubble can
stabilize a three-dimensional curved-domain fluid-elastic system by controlling the spatial average
of the pressure, leading to global existence and exponential decay for small initial data. Finally, we also mention \cite{BKMT} for
treating an inviscid fluid interacting with a nonlinear two-dimensional plate, where the absence of
fluid viscosity changes the available dissipation mechanism.

In this paper, we address a three-dimensional fluid-structure interaction system consisting of the viscous incompressible Navier-Stokes equations coupled with a fourth-order damped plate equation posed on the moving upper boundary of the fluid domain. 
The coupling is imposed through the continuity of velocity at the fluid-structure interface and the balance of forces exerted by the fluid stress on the plate. 
The goal of the present paper is to prove the exponential decay of strong solutions for sufficiently small initial data in suitable Sobolev spaces. When structuring the estimates, we draw inspiration from the paper~\cite{KO1}. An important feature of our result, and a major distinction from \cite{KO1}, is that the initial plate displacement is allowed to be nonzero, which will be important in future applications. Consequently, the initial fluid domain need not coincide with the flat equilibrium domain, and the geometric coefficients arising in the Lagrangian formulation are nontrivial already at the initial time, which is a major difficulty when proving the equipartition estimate; see~\cite{IKLT1,IKLT2,KO1}.
Moreover, in contrast with global results whose smallness threshold depends on a prescribed final time, such as \cite{Le1,Le2},
our smallness condition is uniform in time. More precisely, we show that if the initial fluid velocity, plate displacement, and plate velocity are sufficiently small, then the corresponding strong solution converges exponentially to the flat equilibrium configuration throughout its interval of existence.

The proof relies on a combination of higher-order energy estimates, Stokes-type regularity bounds, and a nonlinear bootstrap argument. One of the main difficulties is to control the interaction between the nonlinear fluid dynamics and the fourth-order plate equation at the moving interface. A key step is to combine the differentiated fluid equations with suitably chosen test functions in order to derive the energy identities \eqref{EQ59} and~\eqref{EQ68}. These identities reveal a cancellation structure at the fluid-structure interface and allow us to construct an energy functional whose time evolution contains the dissipation needed to close the argument; see~\eqref{EQ69}.
An essential ingredient is the Poincar\'e inequality, which provides the coercivity needed to control the lower-order components of the energy by the dissipation and thereby obtain exponential decay.
Another important difficulty we face concerns the pressure, which is the main reason for the immersed fluid-structure system may not be stable in the non-flat configuration~\cite{IKO}. Since the pressure is determined implicitly through the incompressibility condition and the coupling with the moving boundary, it is not directly controlled by the basic energy. To overcome this issue, we derive Stokes-type regularity estimates that control spatial derivatives of the pressure in terms of the velocity and plate variables. We also obtain a lower-order pressure estimate by using the compatibility condition on the pressure, which fixes the additive time-dependent constant and yields control of the full pressure in terms of the total energy. These estimates are essential for closing the nonlinear bootstrap scheme and proving the exponential decay.

The paper is organized as follows. In Section~\ref{sec01}, we introduce the mathematical formulation of the model in Lagrangian coordinates and state the main result. In Section~\ref{sec02}, we establish preliminary estimates for the flow map and prove the Stokes-type estimates needed to control the velocity and pressure. In Section~\ref{sec03}, we derive the higher-order energy identities and the main energy inequality. In Section~\ref{sec04}, we provide the necessary pressure estimates, close the nonlinear bootstrap argument and complete the proof of the main result.

\colb
\startnewsection{The model and main result}{sec01} 
\subsection{Description of the model}
Let $\tt=(0,1)^2\subset \RR^2$ be the unit torus. We assume that, for each time $t\geq 0$, the fluid domain $\Omega_{\eta}(t)$ is a subgraph of a space-time dependent function $\etah\colon \tt \times[0,+\infty)\to {\mathbb R}$,
which represents the height of the moving upper boundary; see Figure~\ref{fig1}. 
More precisely,
\begin{equation*}
	\Omega_{\eta}(t)=
	\{  (\bx, z) : \bx=(x_1,x_2) \in \tt,
	z\in (0,\eta(\bx,t))\}\subseteq\R^3.
\end{equation*}
The fluid is assumed to be incompressible and Newtonian, and its motion is governed by the Navier-Stokes equations
\begin{align}
	&
	\partial_{t} u
	- 
	\Delta u
	+ u\cdot \nabla u
	+ 
	\nabla p
	= 0 \inin{\Omega_\eta(t)\times (0,T)},
	\label{EQ01}
	\\&
	\diver u=0 \inin{\Omega_\eta(t)\times (0,T)}
	,
	\label{EQ02}
\end{align}
where $u\colon  \Omega_{\eta} (t) \to \RR^3$ and $p\colon \Omega_{\eta} (t) \to \RR$ denote the velocity and pressure, respectively.
The height of the moving upper boundary $\eta(\bx,t)$ is described by a fourth-order square root damped plate equation
\begin{equation}
	\etah_{tt}
	- 
	\Delta_{\bx} \eta_t
	+ 
	\Delta_{\bx}^2 \etah
	= \fP 
	\inin{\tt \times (0,T)}
	,
	\label{EQ03}
\end{equation}
where $\Delta_{\bx}:=\partial_{11}+\partial_{22}$ is the horizontal Laplacian.
In \eqref{EQ03}, $\fP(\bx,t)\in\mathbb{R}$ denotes the density of the fluid force acting on the structure in the vertical direction $e_3=(0,0,1)$. More precisely, 
\begin{equation}
	%\fP=-S_{\eta}(-p I+(\nabla u+\nabla^{\tau} u))n^{\eta}\cdot e_3,
	\fP
	=
	-S_{\eta}
	(-p I+\nabla u) (\bx, \eta (\bx,t),t)
	n^{\eta}\cdot e_3
	\inin{\tt \times (0,T)},
	\label{EQ04}
\end{equation}
where $S_{\eta} (\bx,t)=\sqrt{1+|\nabla \eta (\bx,t)|^2}$ is the surface element of the deformed plate and $n^{\eta}$ is the unit outward normal on $\Gamma(t)$, which is the graph of~$\eta (\bx, t)$. Thus,
\begin{align}
	n^{\eta}
	= 
	\frac{1}{\sqrt{1+  |\nabla \eta|^2}}
	(-\partial_{1} \eta, -\partial_{2} \eta, 1).
	\llabel{EQ05}
\end{align}
At the lower and upper boundaries of the fluid domain, we impose the no-slip and the velocity-matching conditions, respectively, i.e.,
\begin{align}
	u(\bx,0,t)&=0
	\onon{\tt \times (0,T)},
	\label{EQ06}
	\\
	u(\bx, \eta(\bx,t),t)
	&=(0,0,\eta_t)
	\onon{\tt \times (0,T)}.
	\label{EQ07}
\end{align}

\begin{figure}[ht]
	\centering
	\begin{tikzpicture}[
		scale=1,
		x={(-0.75cm,-0.45cm)},
		y={(1.05cm,-0.08cm)},
		z={(0cm,1.00cm)},
		>=Latex,
		axis/.style={->,semithick},
		bdry/.style={black,semithick},
		hidden/.style={black,semithick,densely dashed},
		top/.style={thick},
		topdash/.style={thick,densely dashed}
		]
		
		% Dimensions of one periodic cell
		\def\Lone{2.7}
		\def\Ltwo{3.5}
		\def\H{1.55}
		
		% A sample initial height function eta_0(x_1,x_2)
		% This is only for drawing the wavy top boundary.
		\def\etaz#1#2{\H + 0.20*sin(140*(#1)+20) + 0.09*cos(120*(#2)+15)}
		
		% Corners of the bottom periodic cell
		\coordinate (O) at (0,0,0);
		\coordinate (A) at (\Lone,0,0);
		\coordinate (B) at (0,\Ltwo,0);
		\coordinate (C) at (\Lone,\Ltwo,0);
		
		% Coordinate axes
		\draw[axis] (0,0,0) -- (3.35,0,0) node[below left] {$x_1$};
		\draw[axis] (0,0,0) -- (0,4.15,0) node[below right] {$x_2$};
		\draw[axis] (0,0,0) -- (0,0,2.75) node[left] {$z$};
		
		% Bottom boundary z=0
		\draw[bdry] (O) -- (A);
		\draw[bdry] (O) -- (B);
		\draw[bdry] (A) -- (C);
		\draw[bdry] (B) -- (C);
		
		% Vertical dashed guide lines
		\draw[hidden] (0,0,0) -- (0,0,{\etaz{0}{0}});
		\draw[hidden] (\Lone,0,0) -- (\Lone,0,{\etaz{\Lone}{0}});
		\draw[hidden] (0,\Ltwo,0) -- (0,\Ltwo,{\etaz{0}{\Ltwo}});
		\draw[hidden] (\Lone,\Ltwo,0) -- (\Lone,\Ltwo,{\etaz{\Lone}{\Ltwo}});
		
		% Upper free boundary Gamma(eta_0): front and back curves
		\draw[topdash]
		plot[domain=0:\Lone,samples=100,variable=\s]
		({\s},{0},{\etaz{\s}{0}});
		
		\draw[topdash]
		plot[domain=0:\Lone,samples=100,variable=\s]
		({\s},{\Ltwo},{\etaz{\s}{\Ltwo}});
		
		% Upper free boundary: side curves
		\draw[topdash]
		plot[domain=0:\Ltwo,samples=100,variable=\s]
		({0},{\s},{\etaz{0}{\s}});
		
		\draw[topdash]
		plot[domain=0:\Ltwo,samples=100,variable=\s]
		({\Lone},{\s},{\etaz{\Lone}{\s}});
		
		% A few interior dashed vertical lines to suggest the domain
%		\draw[hidden] (1.35,0,0) -- (1.35,0,{\etaz{1.35}{0}});
%		\draw[hidden] (1.35,\Ltwo,0) -- (1.35,\Ltwo,{\etaz{1.35}{\Ltwo}});
		
		% Labels
		\node at (1.35,1.85,2.05) {$\Gamma(t)$};
%		\node at (1.35,1.75,0.75) {$\Omega_{\eta_0}$};
		\node at (1.25,1.55,-0.18) {$z=0$};
		
		% Periodic cell labels
%		\draw[<->,thin] (0,-0.38,0) -- (\Lone,-0.38,0)
%		node[midway,below] {$L_1$};
		
%		\draw[<->,thin] (-0.35,0,0) -- (-0.35,\Ltwo,0)
%		node[midway,below] {$L_2$};
		
%		\node[font=\scriptsize] at (1.35,-0.78,0) {$x_1\sim x_1+L_1$};
%		\node[font=\scriptsize] at (-0.75,1.75,0) {$x_2\sim x_2+L_2$};
		
	\end{tikzpicture}
	\caption{Configuration of the fluid domain $\Omega_{\eta} (t)$}
	\label{fig1}
\end{figure}
Finally, we impose the initial conditions $u(\cdot,0) = u_0$, $\eta(\cdot,0) = \eta_0$ and  $\eta_t(\cdot,0) = \eta_1$ without detailing on compatibility conditions.

\subsection{Reformulation and the main result}
Now, we rewrite the fluid-plate system in the Lagrangian coordinates.
Denote by
\begin{align}
	\eta_0(x)=1+h_0(x), \qquad x\in \mathbb T^2
	\llabel{EQ01a}
\end{align}
the initial height of the fluid,
where
\begin{align}
	\int_{\mathbb T^2} h_0(x)\,dx=0.
	\llabel{EQmean}
\end{align}
We assume that $\eta_0(x)\geq c_0$ for any $x\in \mathbb{T}^2$ for some constant $c_0>0$.
Denote by
$\Omega=\mathbb T^2\times(0,1)$ the fixed reference domain.
We define the trajectory map $\psi(\cdot,t)\colon \Omega \to \Omega_{\eta} (t)$ by
\begin{align}
	&
	\partial_t\psi(x,z,t)=u(\psi(x,z,t),t) \inin{\Omega\times (0,T)},
	\label{EQ08}
	\\&
	\psi(x,z,0)=\psi_0(x,z):=(x, z+zh_0(x)) \inin{\Omega}.
		\label{EQ09}
\end{align}
The Lagrangian velocity $v$ and pressure $q$ are then defined by composing with $\psi$, i.e.,
\begin{align}
	v(x,z,t) &=u(\psi(x,z,t),t) \inin{\Omega\times (0,T)},
	\llabel{EQ10}
	\\
	q(x,z,t)&=p(\psi(x,z,t),t) \inin{\Omega\times (0,T)}.
	\llabel{EQ11}
\end{align}
To measure the solution as a perturbation of the flat equilibrium configuration, we introduce the shifted variables
\begin{align}
	&h(x,t)=\eta(x,t)-1 \inin{\mathbb{T}^2\times (0,T)},
	\label{EQ12}
	\\&
	\Psi(x,z,t)=\psi(x,z,t)-(x,z) \inin{\Omega\times (0,T)}.
	\label{EQ13}
\end{align}
The incompressible Navier-Stokes equations \eqref{EQ01} and \eqref{EQ02} in the Lagrangian coordinates take the form
\begin{align}
	&
	J_0\partial_t v_i
	-\partial_j( J_0 a_{jl}a_{kl} \partial_k v_i)
	+
	J_0 a_{ki} \partial_k q=0 \inin{\Omega\times (0,T)},
	\qquad i=1,2,3,
	\label{EQ14}
	\\&
	a_{ki}\partial_k v_i=0 \inin{\Omega\times (0,T)},
	\label{EQ15}
\end{align}
where $J_0(x)=\det(\nabla\psi_0)=1+h_0(x)$ is the Jacobian and $a= (\nabla \psi)^{-1}$ is the inverse deformation matrix.
The plate equation \eqref{EQ03} becomes
\begin{align}
	h_{tt}-\Delta_x h_t+\Delta_x^2 h
	=
	\left(-J_0 a_{3l} a_{kl}\partial_k v_3+q\right)\big|_{z=1}
	\inin{\mathbb{T}^2 \times (0,T)}.
	\label{EQ16}
\end{align}
The boundary conditions \eqref{EQ06} and \eqref{EQ07} on the fixed lower boundary and upper moving boundary become
\begin{align}
	&v(x,0,t)=0 \onon{\mathbb{T}^2 \times (0,T)},
\label{EQ17}
	\\&
	v(x,1,t)=(0,0,h_t) \onon{\mathbb{T}^2 \times (0,T)},
	\label{EQ18}
\end{align}
and initial fluid and plate velocities satisfy $v_0 = u_0\circ\psi_0$ and $h_1 = \eta_1$.

The following theorem is the main result of the paper.
It provides a~priori estimates for the
exponential decay of strong solutions with sufficiently small initial data.

\colb
\begin{Theorem}
	\label{T01}
	Let $(v,q,a, \Psi,\etaa)$ be a smooth solution to \eqref{EQ14}--\eqref{EQ18} on some time interval $[0,T)$, where $0<T\leq +\infty$, and set
	\begin{align}
		Y(t):=
		\Vert v\Vert_{H^3}
		+
		\Vert v_t\Vert_{H^1}
		+
		\Vert v_{tt} \Vert_{L^2}
		+
		\Vert \etaa \Vert_{H^4}
		+
		\Vert \etaa_t \Vert_{H^3}
		+
		\Vert \etaa_{tt}\Vert_{H^2}
		+
		\Vert \etaa_{ttt}\Vert_{L^2}.
		\label{EQ29a}
	\end{align}
There exist constants $C\geq 1$ and $\eps >0$ such that if $Y(0) \leq \eps$,
	then
	\begin{align}
		Y(t) \leq 
		CY(0)
		 e^{-t/C}
		\comma t\in [0,T).
		\label{EQ29}
	\end{align}
\end{Theorem}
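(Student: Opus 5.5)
The approach is a bootstrap driven by a Lyapunov-type functional, as in \cite{KO1}. Fix a small $\delta>0$ and work on a maximal interval $[0,T_0)\subseteq[0,T)$ on which
\[
\sup_{s\le t}Y(s)+\int_0^t \Vert v\Vert_{H^3}^2\,ds\le \delta .
\]

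\emph{Step 1: flow map and geometry.} Since $a$ satisfies $a_t=-a(\nabla v)a$ with $a(0)=(\nabla\psi_0)^{-1}$, we get
\[
\Vert a-a(0)\Vert_{H^2}\lesssim \int_0^t\Vert v\Vert_{H^3}\,ds\lesssim \delta .
\]
Here the bootstrap control of $\int_0^t \Vert v\Vert_{H^3}$ by $\int_0^t e^{-s/C}$ uses the exponential-decay hypothesis on $[0,T_0)$. Combined with $\Vert h_0\Vert_{H^4}\le\eps$, this gives $\Vert a-I\Vert_{H^2}+\Vert J_0-1\Vert_{H^3}\lesssim \eps+\delta$. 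So all geometric coefficients are small perturbations of the identity uniformly in time, and $\Psi$ stays bounded in $H^3$.

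\emph{Step 2: Stokes regularity.} Treat \eqref{EQ14}--\eqref{EQ15} as a perturbed stationary Stokes problem on $\Omega$. Put $J_0\pa_t v$ on the right-hand side and impose the Dirichlet data \eqref{EQ17}--\eqref{EQ18}. This yields
\[
\Vert v\Vert_{H^3}+\Vert \nabla q\Vert_{H^1}\lesssim \Vert v_t\Vert_{H^1}+\Vert h_t\Vert_{H^{5/2}}+(\eps+\delta)(\Vert v\Vert_{H^3}+\Vert q\Vert_{H^2}),
\]
together with the analogous bound for $(v_t,q_t)$ at one lower order. The mean of $q$ is fixed by integrating \eqref{EQ16} over $\tt$. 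Since $\int h_{tt}=0$, which follows from volume conservation $\int_{\tt} h\,dx=0$ and the divergence-free condition, we obtain $\int_{\tt}q|_{z=1}=\int J_0a_{3l}a_{kl}\pa_kv_3$. This controls $\Vert q\Vert_{L^2}$ and closes the full pressure estimate.

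\emph{Step 3: energy identities and equipartition.} Test \eqref{EQ14} with $v$, with $v_t$ after one time derivative, and with $v_{tt}$ after two time derivatives. Use \eqref{EQ18} to convert the boundary terms into the plate equation \eqref{EQ16} tested with $h_t,h_{tt},h_{ttt}$. The velocity and plate kinetic and elastic energies cancel at the interface. Each level then gives
\[
\frac{d}{dt}E_k+D_k\le \mathcal N_k,
\]
with $D_k$ containing $\Vert \nabla \pa_t^k v\Vert_{L^2}^2+\Vert\nabla \pa_t^{k+1}h\Vert_{L^2}^2$ and $\mathcal N_k$ cubic, of size $\lesssim (\eps+\delta)Y^2$. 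The identities must be written with the time-independent weight $J_0$, so that differentiating in time creates commutators only through $a_t$, which carries a factor of $v$.

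The elastic energy $\Vert\Delta h\Vert^2$ is not dissipated directly. To recover it, add the equipartition term: test \eqref{EQ16} with $h$, and its time derivatives with $\pa_t^kh$, multiplied by a small constant. The resulting fluid term $\int q\,h$ is handled by extending $h$ into $\Omega$ as a divergence-compatible test field and using \eqref{EQ14}. This is the \emph{main obstacle}: with $h_0\ne0$ the coefficients $a(0)\ne I$ and $J_0\neq1$, so one must show that the error terms are bounded by $\eps$ times the energy, using the Step 1 smallness and not relying on $a(0)=I$.

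\emph{Step 4: closing.} Poincaré's inequality applies because $v=0$ at $z=0$ and $h$ has zero mean. Together with Step 2, it gives a functional $\mathcal E\sim Y^2$ with
\[
\frac{d}{dt}\mathcal E+c\,\mathcal E\le C(\eps+\delta)\mathcal E .
\]
Choosing $\eps,\delta$ small yields $\mathcal E(t)\le C\mathcal E(0)e^{-ct}$, hence \eqref{EQ29}. This estimate is strictly better than the bootstrap assumption, so a continuity argument gives $T_0=T$.
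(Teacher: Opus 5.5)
\textbf{Genuine gap: top-order time derivatives of the pressure.} In Steps 3--4 you assert $\frac{d}{dt}E_k+D_k\le\mathcal{N}_k$ with $\mathcal{N}_k\lesssim(\eps+\delta)Y^2$ pointwise in time, and then conclude $\frac{d}{dt}\mathcal{E}+c\,\mathcal{E}\le C(\eps+\delta)\mathcal{E}$. At the top level $k=2$ this bound is false. Test the $\partial_{tt}$-differentiated equation \eqref{EQ14} with $v_{tt}$ and use $\partial_{tt}(J_0a_{ki}\partial_kv_i)=0$; this leaves
\[
\int_\Omega J_0a_{ki}\,\partial_k\partial_{tt}v_i\,\partial_{tt}q
=-\int_\Omega\bigl(2J_0\,\partial_ta_{ki}\,\partial_k\partial_tv_i+J_0\,\partial_{tt}a_{ki}\,\partial_kv_i\bigr)\partial_{tt}q .
\]
Here $q_{tt}$ is controlled neither by $Y$ nor by the dissipation you have, since a Stokes bound for it would bring in $v_{ttt}$. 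The same problem occurs in the top-level equipartition term $\lambda\int_\Omega q_{tt}\,J_0a_{ki}\partial_k\Phi_i$, unless your extension satisfies $J_0a_{ki}\partial_k\Phi_i=0$ exactly. You never specify the extension or how its pressure term is controlled, although you correctly call this the main obstacle.

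The paper removes $q_{tt}$ by integrating by parts in time: see the terms $K_1,K_2$ in the proof of \eqref{EQ107} and the treatment of $\mathcal{L}_{\partial_{tt}}$ in the proof of \eqref{EQ106}. This produces endpoint terms at $t$ and $\tau$. Together with the $\tau$-dependent test function \eqref{EQ61}, this is why the argument closes through the integrated inequality \eqref{EQ108} and Lemma~\ref{LODE}, not a differential inequality. To keep your Lyapunov form, you would have to add explicit cubic correctors such as $\int_\Omega q_t\,(2J_0\partial_ta_{ki}\partial_k\partial_tv_i+J_0\partial_{tt}a_{ki}\partial_kv_i)$ to $\mathcal{E}$. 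Bounding these requires the full estimate of $\Vert q_t\Vert_{L^2}$, including its mean, as in \eqref{EQ89}--\eqref{EQ96}.

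\textbf{Further gaps: the equivalence $\mathcal{E}\sim Y^2$ and the bootstrap.} The equivalence does not follow from your Steps 2--3. Energies built from $\partial_t^k$ alone give only $\Vert h\Vert_{H^2}$, $\Vert h_t\Vert_{H^2}$ and $\Vert h_{tt}\Vert_{H^2}$. But your Stokes bound for $\Vert v\Vert_{H^3}$ needs $\Vert h_t\Vert_{H^{5/2}}$, and $Y$ contains $\Vert h\Vert_{H^4}$ and $\Vert h_t\Vert_{H^3}$.

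The paper obtains these norms by putting $\bar\partial,\bar\partial^2,\bar\partial\partial_t$ into $F$. The price is that the natural extension $\bar\partial\Psi$ of $\bar\partial h$ does not decay, because the interior particle displacement is not controlled by the energy. This is exactly what forces $\Phi=S\Psi(t)-S\Psi(\tau)+(0,0,zSh(\tau))$ and the extra term $C\lambda^2(t-\tau)X(\tau)$ in \eqref{EQ108}.

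If you stay with time derivatives only, you must add an elliptic step for the plate and check that it closes together with the Stokes estimates. For instance, use $\Delta_x^2h_t=-h_{ttt}+\Delta_xh_{tt}+\partial_t(\cdots)|_{z=1}$, which requires traces of $q_t$ and $\nabla v_t$.

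Finally, your bootstrap is inconsistent. The bound $\int_0^t\Vert v\Vert_{H^3}^2\,ds\le\delta$ does not control $\int_0^t\Vert v\Vert_{H^3}\,ds$ uniformly in $t$. Step~1 then appeals to an exponential-decay hypothesis that is not part of your assumption. The $L^1$-in-time control must be built into the bootstrap, as in $\mathcal{H}(t)$ in \eqref{EQ77}.
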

\colb
Note that the time derivatives at $t=0$ in \eqref{EQ29a} are defined iteratively by differentiating \eqref{EQ14} and \eqref{EQ16} in time and evaluating at $t=0$.

\colb
\begin{Remark}
	\label{R01}
	{\rm	The incompressibility condition \eqref{EQ02} and the boundary conditions \eqref{EQ06}--\eqref{EQ07} imply that
		\begin{align}
			\begin{split}
				&
				\frac{d}{dt}
				\int_{\tt} \etaa (x,t) dx
				=\int_{\tt} \pt \eta (x,t) dx
				=\int_{\tt} 	
				u_3 (x, \eta(x,t),t) dx
				\\&\indeq
				=
				\int_{\partial \Omega_\eta (t)} u(x,z,t) \cdot n^\eta dS
				=\int_{\Omega_{\eta} (t)}
				\diver u(x,z) dx dz=0
				\comma t\geq 0.
				\label{EQ22}
			\end{split}
		\end{align}
		We integrate \eqref{EQ16} in spatial variables over $\tt$ to obtain
		\begin{align}
			\int_{\tt} q|_{z=1} \, dx
			=
			\int_{\tt} 
			(J_0 a_{kl} a_{3l} \pa_k v_3) |_{z=1} \,dx.
			\label{EQ23}
		\end{align}
		From the Navier-Stokes equation \eqref{EQ14}, we deduce that the pressure $q$ is determined up to a time-dependent constant.
		Taking into account the relation~\eqref{EQ23},
		the pressure $q$ can then be uniquely determined. 
		Namely, we rewrite
		\begin{align}
			q(x,z,t)
			=
			\Q (x,z,t) + c(t),
			\label{EQ24}	
		\end{align}
		where we impose
		\begin{align}
			\int_{\Omega} 
			\Q (x,z,t) \,dx dz
			=0
			\comma t\geq 0.
			\label{EQ25}	
		\end{align}
		Inserting \eqref{EQ24} into \eqref{EQ23}, we infer that $c(t)$ satisfies
		\begin{align}
			\begin{split}
				c(t) =
				\int_{\tt} 
				(J_0 a_{kl} a_{3l} \pa_k v_3 
				- 
				\Q)|_{z=1} \,dx.
				\label{EQ26}	
			\end{split}
		\end{align}
	}
\end{Remark} 
\colb

\begin{Remark}  
{\rm
In this paper, we restrict ourselves to deriving a priori estimates, and thus a sufficiently regular solution is assumed to be given. More precisely, existing methods for constructing strong solutions (see~\cite{Le1, DjTa}) do not directly yield the higher-order regularity required in our derivations. Construction of such solutions would require further arguments beyond the scope of the present paper.  Accordingly, we do not state the associated higher-order compatibility conditions for the initial data; they need to be satisfied by the assumed solution.
}
\end{Remark}

\begin{Remark}
{\rm
For simplicity, all physical coefficients in the fluid and plate equations have been normalized to one. The argument extends without essential changes to arbitrary fixed positive fluid and plate densities, fluid viscosity, structural damping, and bending stiffness. These coefficients only modify the weights in the energy and dissipation functionals, and the constants in the smallness condition and the exponential-decay estimate then depend on the coefficients.
}
\end{Remark} 

Throughout this paper, $C\geq 1$ denotes a sufficiently large constant whose value may vary from line to line. 
The notation $A\les B$ means that $A\leq CB$ for some constant $C>0$.

\section{Preliminary estimates}
\label{sec02}
\subsection{Particle trajectories}
We begin by deriving some necessary estimates for the particle
trajectory map.
Using $\det (\nabla \psi) =J_0$, which follows from \eqref{EQ02}, we get
\begin{align}
	J_0
	a_{ij} 
	= 
	\frac{1}{2}
	\eps_{imn} \eps_{jkl}
	\pa_m \psi_k \pa_n \psi_l
	=	\frac{1}{2}
	\eps_{imn} \eps_{jkl}
	(\delta_{mk}
	+
	\pa_m \Psi_k) 
	(\delta_{nl}
	+
	\pa_n \Psi_l),
	\label{EQ30}
\end{align}
where $\eps_{ijk}$ denotes the permutation symbol.
As a consequence, the Piola identity
\begin{align}
	\pa_i (J_0 a_{ij}) 
	=0
	\comma j=1,2,3
	\label{EQ31}
\end{align}
holds.
From \eqref{EQ08} and \eqref{EQ13},
we deduce that
\begin{align}
	\partial_t \Psi (x,z,t)= v(x,z,t)
	\inin{\Omega\times (0,T)}.
	\label{EQ32}
\end{align}
On the upper boundary we have $z=1$ since $\psi(x,1,t)=(x,\eta(x,t))$, and thus
\begin{align}
	\Psi(x,1,t) = 
	(0,0,h(x,t))
	\inin{\tt\times (0,T)}
	.
	\label{EQ33}
\end{align}
Since $a(0)=(\nabla\psi_0)^{-1}$ and
$J_0=1+h_0$, we have
\begin{align}
J_0a(0)
=
\begin{pmatrix}
	1+h_0 & 0 & -z\partial_1h_0 \\
	0 & 1+h_0 & -z\partial_2h_0 \\
	0 & 0 & 1
\end{pmatrix}
\label{EQJ}
\end{align}
and
\begin{align}
J_0 a(0)a(0)^T
=
\begin{pmatrix}
	1+h_0+\dfrac{z^2(\partial_1 h_0)^2}{1+h_0}
	&
	\dfrac{z^2(\partial_1 h_0)(\partial_2 h_0)}{1+h_0}
	&
	-\dfrac{z\partial_1 h_0}{1+h_0}
	\\[8pt]
	\dfrac{z^2(\partial_1 h_0)(\partial_2 h_0)}{1+h_0}
	&
	1+h_0+\dfrac{z^2(\partial_2 h_0)^2}{1+h_0}
	&
	-\dfrac{z\partial_2 h_0}{1+h_0}
	\\[8pt]
	-\dfrac{z\partial_1 h_0}{1+h_0}
	&
	-\dfrac{z\partial_2 h_0}{1+h_0}
	&
	\dfrac{1}{1+h_0}
\end{pmatrix}.
\label{EQJ2}
\end{align}

Let $\gamma_0 \in (0,1]$ be a small constant, to be determined at the end of the section,
and let 
$\gamma\in (0,\gamma_0]$.
We assume that 
\begin{align}
		\Vert h_0\Vert_{H^4} \leq \gamma
		\label{EQ34a}
\end{align}
and
\begin{align}
	(\Vert v\Vert_{L^1 H^3}
	+
	\Vert v\Vert_{L^\infty H^3}
	+
	\Vert \nabla q\Vert_{L^\infty L^2}
	)
	(1+ \Vert v\Vert_{L^1 H^3})
	\leq 
	\gamma,
	\label{EQ34}
\end{align}
where the norms are taken on $\Omega \times (0,T)$.
The constant $\gamma_0>0$ shall be chosen universally small so that the absorption arguments in Lemma~\ref{Lstokes} can apply.

The following lemma establishes the necessary estimates on the inverse deformation matrix.

\colb
\begin{Lemma}
	\label{L03}
If \eqref{EQ34a} and \eqref{EQ34} hold, 
		then 
		\begin{align}
			\begin{split}
				\Vert a_t\Vert_{L^\infty H^2}
				+
				\Vert I -J_0 a\Vert_{L^\infty H^2}
				+
				\Vert I - J_0 a a^T\Vert_{L^\infty H^2}
				+
				\Vert \nabla \Psi\Vert_{L^\infty H^2}
				\leq 
				C\gamma
				\label{EQ35}
			\end{split}
		\end{align}
	and 
	\begin{align}
		\Vert \pt (a a^T)\Vert_{H^2} 
		+ 
		\Vert  a_t \Vert_{H^2}
		\leq 
		C \Vert v\Vert_{H^3}
		\comma
		t\in (0,T),
		\label{EQ36}
	\end{align}
for a constant $C>0$.
\end{Lemma}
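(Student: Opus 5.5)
The plan is to first control $\nabla\Psi$, then read off $J_0 a$ from the cofactor formula \eqref{EQ30}, then obtain $a$ itself by dividing by $J_0$, and finally handle the time derivatives. Throughout, $H^2$ in three dimensions is an algebra, and products are estimated by $\Vert fg\Vert_{H^2}\lesssim \Vert f\Vert_{H^2}\Vert g\Vert_{H^2}$.

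\textbf{Step 1: the displacement gradient.} Integrating \eqref{EQ32} in time gives $\Psi(t)=\Psi(0)+\int_0^t v\,ds$, where $\Psi(0)=\psi_0-(x,z)=(0,0,zh_0)$. Hence
\[
\Vert \nabla\Psi(t)\Vert_{H^2}\le \Vert \nabla(zh_0)\Vert_{H^2}+\Vert v\Vert_{L^1H^3}\lesssim \Vert h_0\Vert_{H^3}+\Vert v\Vert_{L^1H^3}\lesssim \gamma,
\]
by \eqref{EQ34a} and \eqref{EQ34}. The factor $1+\Vert v\Vert_{L^1H^3}\ge1$ in \eqref{EQ34} is simply dropped here.

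\textbf{Step 2: the matrix $J_0 a$.} By \eqref{EQ30}, $J_0 a$ is an explicit quadratic polynomial in $\nabla\Psi$. Its constant term is $\frac12\eps_{imn}\eps_{jmn}=\delta_{ij}$, so $J_0a-I$ is a sum of linear and quadratic terms in $\nabla\Psi$. The algebra property then yields
\[
\Vert I-J_0a\Vert_{H^2}\lesssim \Vert\nabla\Psi\Vert_{H^2}+\Vert\nabla\Psi\Vert_{H^2}^2\lesssim\gamma,
\]
since $\gamma\le1$.

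\textbf{Step 3: the matrices $a$ and $J_0aa^T$.} Write $a=J_0^{-1}(J_0a)$ and $J_0aa^T=J_0^{-1}(J_0a)(J_0a)^T$. Since $J_0=1+h_0(x)$ and $\Vert h_0\Vert_{L^\infty}\lesssim\Vert h_0\Vert_{H^4}\le\gamma\le\gamma_0$ is small, the function $1/J_0$ is well defined. It satisfies $\Vert 1/J_0\Vert_{H^2}\lesssim1$ and $\Vert 1-1/J_0\Vert_{H^2}\lesssim\gamma$, which follows from the chain rule for the smooth map $s\mapsto 1/(1+s)$ on $H^2(\tt)$, viewed as $z$-independent functions on $\Omega$. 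Writing
\[
I-J_0aa^T=(I-J_0^{-1}I)+J_0^{-1}\bigl(I-(J_0a)(J_0a)^T\bigr),
\]
and expanding $(J_0a)(J_0a)^T-I$ via $J_0a=I+O(\nabla\Psi)$, gives $\Vert I-J_0aa^T\Vert_{H^2}\lesssim\gamma$. Along the way we also get $\Vert a\Vert_{H^2}\lesssim1$.

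\textbf{Step 4: time derivatives, i.e.\ \eqref{EQ36}.} Since $J_0$ is time independent, differentiating \eqref{EQ30} and using \eqref{EQ32} gives $J_0a_t=\eps\eps(\nabla v)(I+\nabla\Psi)$ schematically, which is linear in $\nabla v$. Therefore
\[
\Vert a_t\Vert_{H^2}\lesssim\Vert1/J_0\Vert_{H^2}\,\Vert\nabla v\Vert_{H^2}\,\bigl(1+\Vert\nabla\Psi\Vert_{H^2}\bigr)\lesssim\Vert v\Vert_{H^3}.
\]
The identity $a_t=-a(\nabla v)a$, which follows from $a\nabla\psi=I$, gives the same bound. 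Then $\pt(aa^T)=a_ta^T+aa_t^T$ is bounded by $\Vert a\Vert_{H^2}\Vert a_t\Vert_{H^2}\lesssim\Vert v\Vert_{H^3}$. Taking the supremum in time and using $\Vert v\Vert_{L^\infty H^3}\le\gamma$ from \eqref{EQ34} yields $\Vert a_t\Vert_{L^\infty H^2}\lesssim\gamma$, which completes \eqref{EQ35}.

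The only mildly delicate point is Step 3: estimating $1/J_0$ in $H^2$ and making sure the nonzero initial displacement $h_0$ enters only through the small quantity $\Vert h_0\Vert_{H^4}\le\gamma$. The remaining steps are routine algebra-property bookkeeping.
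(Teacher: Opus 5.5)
Your proof is correct, but it reaches the bounds on $I-J_0a$ and $I-J_0aa^T$ by a different route from the paper.

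Steps 1 and 4 agree with the paper. The paper also integrates \eqref{EQ32} in time to bound $\nabla\Psi$. It also reads $J_0a_t$ off \eqref{EQ30} as linear in $\nabla v$ with coefficients $I+\nabla\Psi$, divides by $J_0$, and uses the product rule for $\partial_t(aa^T)$.

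The difference is in Steps 2--3. The paper does not expand \eqref{EQ30} statically in $\nabla\Psi$. It writes $I-J_0a(t)=I-J_0a(0)-\int_0^t J_0a_t\,ds$ and reads $I-J_0a(0)$ off the explicit matrix \eqref{EQJ}. This gives bounds of the form $\Vert I-J_0a(0)\Vert_{H^2}+\Vert v\Vert_{L^1H^3}(1+\Vert v\Vert_{L^1H^3})$. It then proves an analogous bound for $I-a$ and combines the two to estimate $I-J_0aa^T$.

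Your argument uses only two facts: $J_0a-I$ is a linear-plus-quadratic polynomial in $\nabla\Psi$ with no constant term, and $J_0aa^T=J_0^{-1}(J_0a)(J_0a)^T$. This route is
\begin{itemize}
\item shorter and pointwise in time;
\item free of the explicit initial matrices \eqref{EQJ}--\eqref{EQJ2} and of a separate estimate for $I-a$;
\item transparent in showing that everything is controlled by $\Vert\nabla\Psi\Vert_{L^\infty H^2}$ and $\Vert h_0\Vert_{H^4}$.
\end{itemize}
The paper's time-integration route instead displays the bounds as initial geometry plus accumulated velocity, which is exactly the structure of hypothesis \eqref{EQ34}. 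It is also the form that still applies when $a$ is controlled only through an evolution law such as $a_t=-a(\nabla v)a$, rather than through a closed cofactor formula.
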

\colb

\begin{proof}
For each $t>0$, it follows from \eqref{EQ32} and the Fundamental Theorem of Calculus that
\begin{align}
	\Psi (x,z,t) 
	= 
	(0, zh_0 (x))
	+
	 \int_0^t v(x,z,s) ds.
	 \llabel{EQ37}
\end{align}
Consequently, we infer from \eqref{EQ34a} and \eqref{EQ34} that
\begin{align}
	\Vert \nabla \Psi\Vert_{L^\infty H^2}
	\les
	\Vert \nabla h_0\Vert_{H^2}
	+
	\Vert v\Vert_{L^1 H^3}
	\les 
	\gamma.
	\label{EQ38}
\end{align}
Using \eqref{EQ30} and \eqref{EQ38}, we obtain
\begin{align}
	\begin{split}
	\Vert J_0 a_t\Vert_{L^\infty H^2}
	\les	
	\Vert \nabla v\Vert_{L^\infty H^2}
	+
	\Vert \nabla v\Vert_{L^\infty H^2}
	\Vert \nabla \Psi\Vert_{L^\infty H^2}
	\les
	\Vert v\Vert_{L^\infty H^3}
	(1+ \Vert v\Vert_{L^1 H^3}),
	\label{EQ39a}
	\end{split}
\end{align}
which leads to
\begin{align}
	\Vert a_t\Vert_{L^\infty H^2}
	\les
	\Vert J_0^{-1}\Vert_{H^2}
	\Vert J_0 a_t\Vert_{L^\infty H^2}
	\les
	\Vert v\Vert_{L^\infty H^3}
	(1+ \Vert v\Vert_{L^1 H^3}).
		\label{EQ39}
\end{align}
By another application of the Fundamental Theorem of Calculus together with \eqref{EQ38} and \eqref{EQ39a}, we have
\begin{align}
	\begin{split}
	\Vert I - J_0 a\Vert_{L^\infty H^2}
	&\les
	\Vert I-  J_0 a(0)\Vert_{H^2}
	+
	\int_0^t 
	\Vert J_0 a_t\Vert_{H^2}
	\\&
	\les
		\Vert I- J_0 a(0)\Vert_{H^2}
		+
	\int_0^t 
	(\Vert \nabla v\Vert_{H^2}
	+
	\Vert \nabla v\Vert_{H^2}
	\Vert \nabla \Psi\Vert_{H^2}
	)	
	\\&
	\les
		\Vert I- J_0 a(0)\Vert_{H^2}
		+
	\Vert v\Vert_{L^1 H^3}
	(1+
	\Vert v\Vert_{L^1 H^3}).
		\label{EQ40}
	\end{split}
\end{align}
A similar argument shows that
\begin{align}
	\Vert I-a\Vert_{L^\infty H^2}
	\les
	\Vert I- a(0)\Vert_{H^2}
	+
	\Vert v\Vert_{L^1 H^3}
	(1+ \Vert v\Vert_{L^1 H^3}).
	\label{EQ40a}
\end{align}
Combining \eqref{EQ40} with \eqref{EQ40a}, we arrive at
\begin{align}
	\begin{split}
	\Vert I- J_0 a a^T\Vert_{L^\infty H^2}
	&\les	
	\Vert I -J_0 a\Vert_{L^\infty H^2}
	+
	\Vert J_0 a\Vert_{L^\infty H^2}
	\Vert I -  a\Vert_{L^\infty H^2}
	\\&
	\les
	\Vert I - J_0 a\Vert_{L^\infty H^2}
	(1+ \Vert I- a\Vert_{L^\infty H^2})
	+
	\Vert I-a\Vert_{L^\infty H^2}
	\\&
	\les
		\Vert I- J_0 a(0)\Vert_{H^2}
		+
		\Vert I - a(0)\Vert_{H^2}
	+
	\Vert v\Vert_{L^1 H^3}
	(1 + \Vert v\Vert_{L^1 H^3}).
	\label{EQ41}
	\end{split}
\end{align}
From \eqref{EQJ}, \eqref{EQ38}, \eqref{EQ39}--\eqref{EQ40}, and \eqref{EQ41}, we complete of the proof of~\eqref{EQ35}.

From \eqref{EQ38} and \eqref{EQ39}, 
we infer that
\begin{align}
	\begin{split}
		\Vert a_t \Vert_{H^2}
		\les
		(\Vert \nabla \Psi\Vert_{L^\infty H^2}
		+1)
		\Vert v\Vert_{H^3}
		\Vert J_0^{-1}\Vert_{H^2}
		\les
		\Vert v\Vert_{H^3}
		\label{EQ42}
	\end{split}
\end{align}
for any $t\in (0,T)$.
Using \eqref{EQ40a} and \eqref{EQ42}, we get
\begin{align}
	\begin{split}
	\Vert \pt (a a^T)\Vert_{H^2}
	&
	\les
	\Vert a_t \Vert_{H^2}
	\Vert a\Vert_{H^2}	
	\les
	(1+ \Vert I -a\Vert_{H^2})
	\Vert v\Vert_{H^3}
	\les
	\Vert v\Vert_{H^3},
	\llabel{EQ43}
	\end{split}
\end{align}
for any $t\in (0,T)$, and
\eqref{EQ36} follows.
\end{proof}

\subsection{Stokes-type estimates}
Here we recall the classical Sobolev regularity of solutions with the Dirichlet boundary conditions; see~\cite{GS,S,Galdibook}.
\colb
\begin{Lemma}
	\label{Lstokes0}
	Suppose that $u$ and $p$ are solutions to the Stokes system
	\begin{align}
		\begin{split}
			&
			- \Delta u + \nabla p =f
			\inin{\Omega},
			\\&
			\diver u= g
			\inin{\Omega},
			\\&
			u = h
			\onon{ \partial \Omega}
			.
		\end{split}
		\llabel{EQ44}
	\end{align}
Then we have
	\begin{align}
		\begin{split}
		\Vert u\Vert_{H^2}
		+
		\Vert \nabla p\Vert_{L^2}
		\leq
		C
		(\Vert f\Vert_{L^2}
		+
		\Vert g\Vert_{H^1}
		+
		\Vert h \Vert_{H^{3/2} (\partial \Omega)})
		\llabel{EQ45}
		\end{split}
	\end{align}
	and
		\begin{align}
		\begin{split}
			\Vert u\Vert_{H^3}
			+
			\Vert \nabla p\Vert_{H^1}
			\leq
			C 
			(\Vert f\Vert_{H^1}
			+
			\Vert g\Vert_{H^2}
			+
			\Vert h \Vert_{H^{5/2} (\partial \Omega)}).
			\llabel{EQ46}
		\end{split}
	\end{align}
where $C>0$ is a constant.
\end{Lemma}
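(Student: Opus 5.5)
We intend to prove this by the classical route: reduce to homogeneous data, get a weak solution by Lax--Milgram, and then gain regularity using the slab geometry of $\Omega=\tt\times(0,1)$. Throughout we implicitly assume the compatibility condition $\int_\Omega g=\int_{\partial\Omega}h\cdot n$. Since $p$ is only determined up to a constant, only $\nabla p$ is estimated. In the form stated here the result is standard (see \cite{GS,S,Galdibook}); the following is how we would verify it directly.

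\emph{Step 1 (lifting).} Let $H=\mathcal E h\in H^2(\Omega)$ (resp.\ $H^3$) be a bounded trace lifting, so that $\Vert H\Vert_{H^{k}}\les \Vert h\Vert_{H^{k-1/2}(\partial\Omega)}$ for $k=2,3$. The remaining divergence $\tilde g=g-\diver H$ has zero mean by the compatibility condition. We remove it by writing it as $\diver w$ with $w|_{\partial\Omega}=0$ and $\Vert w\Vert_{H^{k}}\les\Vert\tilde g\Vert_{H^{k-1}}$. Because $\tilde g$ need not vanish on $\partial\Omega$, we would not use the Bogovski\u{\i} operator directly. Instead:
\begin{enumerate}
\item solve the Neumann problem $\Delta\phi=\tilde g$, $\partial_n\phi=0$, which on the slab can be done mode by mode in the horizontal Fourier variable;
\item note that $\nabla\phi$ then has zero normal trace but possibly nonzero tangential trace;
\item remove that tangential trace by subtracting a divergence-free field of the form $\curl \Phi$, where $\Phi$ lifts appropriate boundary data.
\end{enumerate}
This reduces the problem to $-\Delta u+\nabla p=F$, $\diver u=0$, $u|_{\partial\Omega}=0$, with $\Vert F\Vert_{H^{k-2}}$ controlled by the right-hand side of the claimed bound.

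\emph{Step 2 (weak solution and pressure).} On the space of divergence-free $H^1_0$ fields, Lax--Milgram gives $u$ with $\Vert u\Vert_{H^1}\les\Vert F\Vert_{H^{-1}}$. The pressure is recovered from Ne\v{c}as' inequality (de Rham), with $\Vert p-\bar p\Vert_{L^2}\les\Vert F\Vert_{H^{-1}}$.

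\emph{Step 3 (tangential regularity).} Apply horizontal difference quotients in $x_1,x_2$. These commute with the equations, preserve the boundary conditions and the divergence constraint, and create no boundary terms thanks to periodicity. Testing the differenced equation against the differenced solution yields $\partial_{x}u\in H^1$, i.e.\ $\nabla\partial_x u\in L^2$, and $\partial_x p\in L^2$, with the desired bound.

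\emph{Step 4 (normal regularity).} This is the step we expect to be the main obstacle, since the pressure must be recovered in the normal direction as well. We would proceed in this order:
\begin{enumerate}
\item From $\diver u=0$, obtain $\partial_z u_3=-\partial_1u_1-\partial_2u_2\in H^1$, hence $\partial_{zz}u_3\in L^2$.
\item From the third momentum equation, get $\partial_z p=F_3+\Delta u_3$. Every term on the right is now in $L^2$, since $\Delta_x u_3$ and $\partial_{zz}u_3$ are controlled.
\item From the first two equations, get $\partial_{zz}u_j=-F_j-\Delta_x u_j+\partial_j p\in L^2$ for $j=1,2$.
\end{enumerate}
This gives $\Vert u\Vert_{H^2}+\Vert\nabla p\Vert_{L^2}\les\Vert F\Vert_{L^2}$. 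For the $H^3$ estimate we repeat Steps 3--4 with one more tangential derivative, then differentiate the identities of Step 4 once in $z$ to control $\partial_z^3u$ and $\partial_z^2p$.

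Combining with Step 1, the stated bounds follow with a constant depending only on $\Omega$.
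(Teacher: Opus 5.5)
Your argument is correct. It differs from the paper in that the paper gives no proof of this lemma: it only recalls it as classical Dirichlet--Stokes regularity and cites \cite{GS,S,Galdibook}.

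\textbf{What your route does differently.} You give a self-contained proof tailored to $\Omega=\tt\times(0,1)$. Horizontal difference quotients commute with the operator, the boundary conditions and the divergence constraint, so tangential regularity follows from the basic energy estimate. Because $\partial\Omega$ is flat, the normal regularity ($\partial_{zz}u_3$, then $\partial_z p$, then $\partial_{zz}u_{1,2}$) is read off algebraically from $\diver u=0$ and the momentum equations. After one more tangential derivative, the same algebra gives $\partial_z^3u$ and $\partial_{zz}p$.

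\textbf{What each approach buys.} The cited general theory covers arbitrary smooth domains. There the same scheme needs localization and boundary flattening, which produces variable-coefficient systems, and that is where most of the work in the classical proof lies. Your argument avoids all of this and applies verbatim to the laterally periodic slab. The general results are usually formulated for bounded domains in $\mathbb{R}^3$ and must be adapted, if only routinely, to this setting.

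\textbf{Your reduction in Step 1 is sound.} Solving the Neumann problem mode by mode in the horizontal Fourier variable gives $\phi\in H^{k+1}$. The zero mode is solvable exactly because $\int_\Omega\tilde g=0$. Taking $\Phi_3=0$, $\Phi_1=\Phi_2=0$ on $\{z=0,1\}$, and prescribing $\partial_z\Phi_1,\partial_z\Phi_2$ there by a Neumann-trace lifting has two effects. The normal component $\partial_1\Phi_2-\partial_2\Phi_1$ of $\curl\Phi$ vanishes on the boundary, and its tangential components match $\nabla_x\phi$. Hence $\nabla\phi-\curl\Phi\in H^k\cap H^1_0$, as you need.

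\textbf{Two bookkeeping remarks.} First, the lemma is an a priori estimate for a given solution, so the compatibility condition $\int_\Omega g=\int_{\partial\Omega}h\cdot n$ holds automatically by the divergence theorem; you do not need to assume it. Second, in Step 2 you should invoke uniqueness of divergence-free $H^1_0$ weak solutions. This identifies the given $u-H-w$ with the Lax--Milgram solution, or lets you run Steps 3--4 directly on the given solution.
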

\colb

The following lemma provides necessary Stokes-type estimates for the solution of the system \eqref{EQ14}--\eqref{EQ18}.
\colb
\begin{Lemma}
	\label{Lstokes}
There exist constants $\gamma_0>0$ and $C>0$ such that if \eqref{EQ34a} and \eqref{EQ34} hold, 
then
\begin{align}
	\begin{split}
		\Vert v_t \Vert_{H^2}
		+
		\Vert v\Vert_{H^3}
		+
		\Vert \nabla q\Vert_{H^1}
		+
		\Vert \nabla q_t \Vert_{L^2}
		\leq
		C
		(	
		\Vert  v_{tt} \Vert_{L^2}
		+
		\Vert v_t\Vert_{L^2}
		+
		\Vert \etaa_t\Vert_{H^3}
		+
		\Vert \etaa_{tt}\Vert_{H^2})
		\label{EQ47}
	\end{split}
\end{align}
for any $t\in (0,T)$.
\end{Lemma}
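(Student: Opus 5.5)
We rewrite \eqref{EQ14}--\eqref{EQ15} as a perturbed Stokes system on the fixed domain $\Omega$ and apply Lemma~\ref{Lstokes0} twice: once to $(v,q)$ at the $H^3\times H^2$ level, and once to the time-differentiated system $(v_t,q_t)$ at the $H^2\times H^1$ level. All perturbations are absorbed using the smallness from Lemma~\ref{L03}.

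\textbf{Step 1: the Stokes problem for $(v,q)$.} Write
\begin{align*}
-\Delta v_i+\partial_i q
=
-J_0\partial_t v_i
+\partial_j\bigl((J_0a_{jl}a_{kl}-\delta_{jk})\partial_k v_i\bigr)
+\bigl((\delta_{ki}-J_0a_{ki})\partial_k q\bigr)
=: f_i .
\end{align*}
Since $J_0a_{ki}\partial_k v_i=0$, the divergence satisfies
\begin{align*}
\diver v=(\delta_{ki}-J_0a_{ki})\partial_k v_i=:g .
\end{align*}
By \eqref{EQ17}--\eqref{EQ18}, the boundary data are $v=0$ at $z=0$ and $v=(0,0,h_t)$ at $z=1$. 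The $H^3$ estimate of Lemma~\ref{Lstokes0} gives
\begin{align*}
\Vert v\Vert_{H^3}+\Vert\nabla q\Vert_{H^1}
\les
\Vert f\Vert_{H^1}+\Vert g\Vert_{H^2}+\Vert h_t\Vert_{H^{5/2}(\tt)} .
\end{align*}
We bound the pieces as follows.
\begin{itemize}
\item Since $H^2$ is an algebra in three dimensions, \eqref{EQ35} gives $\Vert I-J_0aa^T\Vert_{H^2}\le C\gamma$. Hence $\Vert\partial_j((J_0aa^T-I)\nabla v)\Vert_{H^1}\les\gamma\Vert v\Vert_{H^3}$.
\item Similarly, $\Vert (I-J_0a)\nabla q\Vert_{H^1}\les\gamma\Vert\nabla q\Vert_{H^1}$ and $\Vert g\Vert_{H^2}\les\gamma\Vert v\Vert_{H^3}$.
\item The time-derivative term gives $\Vert J_0 v_t\Vert_{H^1}\les\Vert v_t\Vert_{H^1}$, using \eqref{EQ34a}.
\end{itemize}
Absorbing the $\gamma$ terms for $\gamma_0$ small yields
\begin{align*}
\Vert v\Vert_{H^3}+\Vert\nabla q\Vert_{H^1}
\les
\Vert v_t\Vert_{H^1}+\Vert h_t\Vert_{H^3} .
\end{align*}

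\textbf{Step 2: the Stokes problem for $(v_t,q_t)$.} Differentiate \eqref{EQ14}--\eqref{EQ15} in time. This gives a Stokes system for $(v_t,q_t)$ with the following data.
\begin{itemize}
\item Forcing: $-J_0v_{tt}$, the analogous $\gamma$-small perturbations of $\nabla v_t$ and $\nabla q_t$, and commutator terms $\partial_j(\partial_t(J_0aa^T)\nabla v)$ and $J_0a_t\nabla q$.
\item Divergence: $(I-J_0a)\nabla v_t-J_0a_t\nabla v$.
\item Boundary values: $v_t=(0,0,h_{tt})$ at $z=1$ and $0$ at $z=0$.
\end{itemize}
The $H^2$ estimate of Lemma~\ref{Lstokes0} requires $f\in L^2$, $g\in H^1$ and boundary data in $H^{3/2}$. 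By \eqref{EQ36} and \eqref{EQ35}, the commutator terms are bounded in $L^2$ by
\begin{align*}
\Vert\partial_t(aa^T)\Vert_{H^2}\Vert v\Vert_{H^2}+\Vert a_t\Vert_{L^\infty}\Vert\nabla q\Vert_{L^2}
\les
\gamma\bigl(\Vert v\Vert_{H^3}+\Vert \nabla q\Vert_{H^1}\bigr),
\end{align*}
using the smallness of $\Vert v\Vert_{L^\infty H^3}$ in \eqref{EQ34}. Likewise $\Vert J_0a_t\nabla v\Vert_{H^1}\les\gamma\Vert v\Vert_{H^3}$. Therefore
\begin{align*}
\Vert v_t\Vert_{H^2}+\Vert\nabla q_t\Vert_{L^2}
\les
\Vert v_{tt}\Vert_{L^2}+\Vert h_{tt}\Vert_{H^2}+\gamma\bigl(\Vert v\Vert_{H^3}+\Vert\nabla q\Vert_{H^1}\bigr).
\end{align*}

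\textbf{Step 3: closing.} Substitute Step 2 into Step 1, interpolating
\begin{align*}
\Vert v_t\Vert_{H^1}\le\delta\Vert v_t\Vert_{H^2}+C_\delta\Vert v_t\Vert_{L^2}.
\end{align*}
Summing the two inequalities with suitable weights, we choose $\delta$ small and then $\gamma_0$ small. This absorbs $\Vert v_t\Vert_{H^2}$, $\Vert v\Vert_{H^3}$ and $\Vert\nabla q\Vert_{H^1}$ into the left-hand side and gives \eqref{EQ47}.

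\textbf{Main obstacle.} The delicate point is the pressure in the time-differentiated system. The term $J_0a_t\nabla q$ only produces $\Vert a_t\Vert_{L^\infty}\Vert\nabla q\Vert_{L^2}$, so it closes only because \eqref{EQ34} makes $a_t$ itself small (through $\Vert v\Vert_{L^\infty H^3}\le\gamma$). This term must be carefully paired with the Step 1 bound on $\nabla q$.

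A second concern is that Lemma~\ref{Lstokes0} determines $q$ only up to a constant. This is harmless here, since only $\nabla q$ and $\nabla q_t$ appear in \eqref{EQ47}.

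Finally, the boundary $\partial\Omega$ has only the flat components $z=0,1$ (horizontally periodic), so the trace norms $\Vert h_t\Vert_{H^{5/2}}$ and $\Vert h_{tt}\Vert_{H^{3/2}}$ are dominated by $\Vert h_t\Vert_{H^3}$ and $\Vert h_{tt}\Vert_{H^2}$ as stated.
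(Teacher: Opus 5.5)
Your proposal is correct and follows the paper's proof essentially step for step. You use the $H^3$ Stokes estimate for $(v,q)$ with absorption of the $\gamma$-small perturbations, then the $H^2$ estimate for the time-differentiated system using \eqref{EQ35}--\eqref{EQ36} and the smallness in \eqref{EQ34}, and you close via the interpolation of $\Vert v_t\Vert_{H^1}$. The only difference is cosmetic: you keep the pressure perturbation in the nondivergence form $(I-J_0a)\nabla q$, whereas the paper uses the Piola identity to write it as $\partial_k\bigl((\delta_{ki}-J_0a_{ki})(q-\int_\Omega q)\bigr)$ and then invokes Poincar\'e, which yields the same bound.
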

\colb

\begin{proof}
	Using the Piola identity \eqref{EQ31}, we have
	\begin{align}
		\pa_k \left(
		(\delta_{ki}
		-
		J_0
		a_{ki}) \int_{\Omega} q \right)=0
			\comma i=1,2,3.
		\llabel{EQ48}
	\end{align}
Therefore, \eqref{EQ14} and \eqref{EQ15} can be rewritten as \begin{align}
	\begin{split}
	-\Delta 
	v_i +
	\pa_i q
	&=
	-J_0 \pt v_i - \pa_j ((\delta_{jk} - J_0 a_{jl} a_{kl})
	\pa_k v_i
	)
	\\&\indeq
	+ 
	\pa_k \left(
	(\delta_{ki} - J_0  a_{ki}) 
	\left(q-\int_{\Omega} q\right)
	\right)
	\comma i=1,2,3,
	\\
	\diver v&=
	(\delta_{ki} 
	-
	J_0 a_{ki}) \pa_k v_i
	.
	\label{EQ49}
	\end{split}
\end{align}
Applying Lemma~\ref{Lstokes0} to the system \eqref{EQ49} with the boundary conditions \eqref{EQ17} and \eqref{EQ18}, we obtain
\begin{align}
	\begin{split}
	\Vert v\Vert_{H^3}
	+
	\Vert \nabla q\Vert_{H^1}
	&\les
	\Vert v_t \Vert_{H^1}
	+
	\Vert (I -J_0 a) \nabla v
	\Vert_{H^2}
	+
	\Vert \etaa_t \Vert_{H^{5/2} (\Gamma_1)}
	\\&\indeq
	+
	\Vert \nabla ((I- J_0 a a^T) \nabla v)\Vert_{H^1}
	+
	\left
	\Vert \nabla ((I -J_0 a) \left(
	q-\int_{\Omega} q
	\right))
	\right\Vert_{H^1}
	\\&
	\les
	\Vert v_t \Vert_{H^1}
	+
	\Vert I -J_0 a\Vert_{H^2} 
	\Vert v\Vert_{H^3}
	+
	\Vert \etaa_t\Vert_{H^3}
	+
	\Vert I - J_0 a a^T\Vert_{H^2}
	\Vert v\Vert_{H^3}
	\\&\indeq
	+
	\Vert I - J_0 a\Vert_{H^2}
	\left\Vert q- \int_{\Omega} q\right \Vert_{H^2}
	\\&
	\les
	\Vert v_t \Vert_{H^1}
	+ \Vert \etaa_t \Vert_{H^3}
	+
	\gamma
	(\Vert v\Vert_{H^3} + \Vert \nabla q\Vert_{H^1}),
	\label{EQ50}
	\end{split}
\end{align}
where we used Lemma~\ref{L03} and the Poincar\'e inequality in the last step.
Taking $\gamma_0>0$ sufficiently small
and recalling that $\gamma \in (0,\gamma_0]$, we infer that
\begin{align}
	\begin{split}
	\Vert v\Vert_{H^3}
	+
	\Vert \nabla q\Vert_{H^1}
	\les
	\Vert v_t \Vert_{H^1}
	+
	\Vert \etaa_t \Vert_{H^3}
	\les
	\delta
	\Vert v_t\Vert_{H^2}	
	+
	C_\delta \Vert v_t\Vert_{L^2}
	+\Vert \etaa_t\Vert_{H^3},
	\label{EQ51}
	\end{split}
\end{align}
for any $\delta \in (0,1)$, where we used Sobolev interpolation in the last inequality.

To estimate $v_t$, we differentiate \eqref{EQ49} in time and obtain
\begin{align}
	\begin{split}
	-\Delta \pt
	v_i +
	\pa_i q_t
	&=
	-J_0 \pa_{tt} v_i 
	- 
	\pt  \pa_j ((\delta_{jk} - J_0 a_{jl} a_{kl})
	\pa_k v_i
	)
	\\&\indeq
	+ 
	\pt
	(
	(\delta_{ki} - J_0 a_{ki}) 
	\pa_k  q )
	\comma i=1,2,3,	
		\\
	\diver \pt v
	&= 
	\pt (
	(\delta_{ki} 
	- 
	J_0 a_{ki}) \pa_k v_i)
	,
	\label{EQ52}
	\end{split}
\end{align}
with the boundary conditions
\begin{align}
\begin{split}
		&v_t = (0,0, \etaa_{tt}) \onon{\Gamma_1},
	\\&	
	v_t= 0 \onon{\Gamma_0}.
	\label{EQ53}
\end{split}
\end{align}
Using Lemmas~\ref{L03} and~\ref{Lstokes0}, we have
\begin{align}
	\begin{split}
		&
	\Vert v_t \Vert_{H^2}
	+
	\Vert  \nabla q_t \Vert_{L^2}	
	\les 
	\Vert v_{tt}\Vert_{L^2}
	+
	\Vert \pt \nabla ((I -J_0 a a^T) \nabla v)\Vert_{L^2}
	+
	\Vert \pt ((I- J_0 a) \nabla q)\Vert_{L^2}
	\\&\indeq
	+
	\Vert \pt ((I -J_0 a) \nabla v)\Vert_{H^1}
	+
	\Vert \etaa_{tt}\Vert_{H^{3/2}}
	\\&
	\les
	\Vert v_{tt}\Vert_{L^2}
	+
	\Vert I -J_0 a a^T \Vert_{H^2}
	\Vert v_t\Vert_{H^2}
	+
	\Vert\pt (a a^T) \Vert_{H^2}
	\Vert v\Vert_{H^2}
	+
	\Vert I -J_0 a\Vert_{H^2}
	\Vert \nabla q_t\Vert_{L^2}
	\\&\indeq
	+
	\Vert a_t\Vert_{H^2} 
	\Vert \nabla q\Vert_{L^2}
	+
	\Vert a_t\Vert_{H^2}
	\Vert v\Vert_{H^2}
	+
	\Vert I -J_0 a\Vert_{H^2}
	\Vert v_t \Vert_{H^2}
	+ 
	\Vert \etaa_{tt} \Vert_{H^2}
	\\&
	\les 
	\Vert v_{tt}\Vert_{L^2}
	+
	\gamma 
	(\Vert v_t\Vert_{H^2} 
	+
	\Vert \nabla q_t\Vert_{L^2}
	+\Vert v\Vert_{H^3}
	)
	+
	\Vert \etaa_{tt}\Vert_{H^2},
	\label{EQ54}
	\end{split}
\end{align}
where we used $\Vert v\Vert_{H^2} + \Vert \nabla q\Vert_{L^2} \leq \gamma$ in the last step due to~\eqref{EQ34}.
Finally,
combining \eqref{EQ51} and \eqref{EQ54} and taking $\gamma_0, \delta>0$ sufficiently small, we conclude the proof of~\eqref{EQ47}.
\end{proof}

\section{Energy estimates}
\label{sec03}

\subsection{Energy estimates}
Denote by
\begin{align}
	\F:=\{\partial_t, \bp, \partial_{tt}, \bp \pt, \bp^2\}
	\label{EQ56}
\end{align}
the set of tangential differential operators, where $\bp=\pa_1$ or
$\pa_2$ are the horizontal derivatives. 
With $\D \in \F$,
we apply $\D$ to \eqref{EQ14} and \eqref{EQ15} to obtain
\begin{align}
	&
	J_0 \partial_{t}\D v_i
	- 
	\D
	\partial_{j}( J_0 a_{jl} a_{kl}\partial_{k}
	v_i)
	+ 
	\partial_{k} (J_0 a_{ki} \D q)
	= 
	\pa_k (J_0 a_{ki} \D q)
	-
	\D \pa_k (J_0 a_{ki} q)
	\nonumber
	\\&\qquad\qquad
	+
	J_0\pt \D v_i - \D (J_0 \pt v_i),
	\label{EQ55}
	\\&
	a_{ki} \partial_{k}\D v_i
	= 
	-\D (a_{ki} \partial_{k} v_i) +    
	a_{ki} \partial_{k}\D v_i.
	\label{EQ57}
\end{align}
Next, we apply $\D$ to  \eqref{EQ16} to deduce
\begin{equation}
	\D \etaa_{tt}
	- 	
	\Delta_{\bx} \D \etaa_t
	+ 
	\Delta_{\bx}^2 \D \etaa
	= 
	\D 
	(- J_0 a_{kl} a_{3l} \partial_k v_3 +q)|_{z=1}.
	\label{EQ58}
\end{equation}
Testing \eqref{EQ55} with $\D v_i$ and \eqref{EQ58} with $\D \etaa_t$, respectively, and 
adding the resulting identities yields
\begin{align}
	\begin{split}
		&
		\frac12
		\frac{d}{dt}
		\left(
		\inte J_0 |\D v|^2 
		+
		\int_{\tt} |\D \etaa_t|^2
		+ 
		\int_{\tt} 
		|\Delta_{\bx} \D \etaa |^2
		\right)
		+
	 \int_{\Omega} |S \nabla v|^2
		+
		\int_{\tt} 
		|\nabla_{\bx} \D \etaa_t|^2
		\\&\indeq
		=
		\inte 
	(	
	\pa_k (J_0 a_{ki} \D q)
	-\D \pa_k (J_0 a_{ki} q)	
	) \D v_i 
		+
		\inte J_0(	-\D (a_{ki} \partial_{k} v_i) 
		+    
		a_{ki} \partial_{k}\D v_i) 
		\D q
		\\&\indeq\indeq
		+
		\int_{\Omega}
		(\delta_{jk} - J_0 a_{jl} a_{kl} ) 
		\pa_k \D v_i \pa_j \D v_i
		-
		\int_{\Omega} 
		(\D (J_0 a_{jl} a_{kl} \pa_k v_i) - J_0 a_{jl} a_{kl} \pa_k \D v_i) 
		 \pa_j \D v_i
		 \\&\indeq\indeq
		 +
		 \int_{\Omega}
		( J_0 \pt \D v_i - \D (J_0 \pt v_i)) \D v_i
		,
	\end{split}
	\label{EQ59}
\end{align}
where we used \eqref{EQ57} and the boundary conditions $\D v_3 = \D \etaa_t$ and $J_0 a_{33}=1$ on $\Gamma_1 \times (0,T)$, which follow from \eqref{EQ18}, \eqref{EQ30}, and~\eqref{EQ33}.
In \eqref{EQ59}, $\nabla_x:= (\pa_1, \pa_2)$ denotes the horizontal gradient.

Testing \eqref{EQ58} with $\D \etaa$, we obtain
 \begin{align}
 	\begin{split}
 		&
 	\frac{d}{dt}
 	\int_{\tt} 
  \D \etaa_t \D \etaa
 		+
 		\frac{1}{2}
 		\frac{d}{dt}
 		\int_{\tt} 
 		|\nabla_{\bx} \D \etaa|^2
 		+
 		\int_{\tt} |\Delta_x \D \etaa|^2
 		-
 		\int_{\tt} | \D \etaa_t|^2
 		\\&\indeq
 		=
 		-
 		\int_{\tt} \D (J_0 a_{kl} a_{3l} \pa_k 
 		 v_3) |_{z=1} 
 		 \D \etaa 
 		+
 		\int_{\tt} 
 	\D q|_{z=1} 	\D \etaa 
 		.
 		\label{EQ60}
 	\end{split}
 \end{align}
To derive a corresponding identity for the fluid equation \eqref{EQ55}, we
introduce the test function
\begin{align}
	\Phi (x,z,t)
	&:=
	\D \Psi (x,z,t) 
	- 
	\D \Psi (x,z,\tau)
	+ 
	(0,0, z \D \etaa (x,\tau))
	\inin{\Omega\times (0,T)},
	\label{EQ61}
\end{align}
where $\tau \in [0,t]$ is fixed.
Note that $\D\Psi (x,z,\tau)=0$ and $\D\etaa (x,\tau)=0$ for $\D \in \{\pt, \bp\pt,\pt^2\}$.
Integrating \eqref{EQ18} in time from $\tau$ to $t$, we obtain
\begin{align}
	\Psi (x,1,t)- \Psi (x,1,\tau)
	=(0,0, \etaa(x,t) - \etaa (x,\tau)).
	\llabel{EQ62}
\end{align}
Applying $\D$ to this identity gives
\begin{align}
	\D \Psi (x,1,t)
	- \D \Psi (x,1,\tau)
	=(0,0, \D \etaa (x,t) - \D \etaa (x,\tau))
	\label{EQ63}.
\end{align}
From \eqref{EQ61} and \eqref{EQ63} it follows that
\begin{align}
	\begin{split}
		&\Phi (x,1,t)= (0, 0, \D h(x,t) )
	\onon{\tt \times (0,T)},
	\\&
	\Phi (x,0,t)= 0
	\onon{\tt \times (0,T)}
	.
	\end{split}
	\label{EQ64}
\end{align}
Testing \eqref{EQ55} with $\Phi_i$, we arrive at
\begin{align}
\begin{split}
	&\frac{d}{dt}
	\int_\Omega J_0 Sv_i\Phi_i
	-
	\int_\Omega J_0|Sv|^2
	+
	\int_\Omega S\bigl(J_0a_{jl}a_{kl}\partial_kv_i\bigr)\partial_j\Phi_i
	-
	\int_{\mathbb T^2}S
	\bigl(J_0a_{3l}a_{kl}\partial_kv_3\bigr)\Phi_3\big|_{z=1}
 	\\&\qquad
	-
	\int_\Omega
	\D 
	(J_0 a_{ki}q)
	\partial_k\Phi_i
	+\int_{\mathbb T^2} Sq
	\Phi_3\big|_{z=1}
	=
	\int_\Omega
	(J_0 \partial_tSv_i
	-S(J_0\partial_tv_i))\Phi_i
	,
	\label{EQ65}
\end{split}
\end{align}
where we used the boundary conditions \eqref{EQ64}, $J_0 a_{33}=1$ on $\Gamma_1 \times (0,T)$ and $\pt \Phi_i= S \pt \Psi_i = S v_i$.
The third term on the left-hand side may be rewritten as 
\begin{align}
	\begin{split}
	&\int_\Omega
	S(J_0a_{jl}a_{kl}\partial_kv_i)\partial_j\Phi_i
	 =
	\int_\Omega \partial_jSv_i\,\partial_j\Phi_i
	+
	\int_\Omega
	S((J_0a_{jl}a_{kl}-\delta_{jk})\partial_kv_i
	)
	\partial_j\Phi_i
	\\
	&\quad =
	\int_\Omega
	\nabla S\partial_t(\Psi(t)-\Psi(\tau))
	:
	\nabla(S\Psi(t)-S\Psi(\tau))
	+
	\int_\Omega
	\nabla Sv_3:\nabla(zSh(\tau))
	\\
	&\qquad
	+
	\int_\Omega
	S ((J_0a_{jl}a_{kl}-\delta_{jk})\partial_kv_i )
	\partial_j\Phi_i
	\\
	&\quad =
	\frac12\frac{d}{dt}
	\|\nabla S(\Psi-\Psi(\tau))\|_{L^2}^2
	+
	\int_\Omega
	\nabla Sv_3:\nabla(zSh(\tau))
	\\
	&\qquad
	+
	\int_\Omega
	S(
	(J_0a_{jl}a_{kl}-\delta_{jk})\partial_kv_i
	)
	\partial_j\Phi_i,
	\label{EQ66}
	\end{split}
\end{align}
while the 
fifth term on the left-hand side of \eqref{EQ65} may be recast as
\begin{align}
	\begin{split}
	- 
	\int_{\Omega}
	\D 
	(J_0 a_{ki}
	q ) \partial_k \Phi_i 
	=
	\int_{\Omega}
	\D ((\delta_{ki} -J_0 a_{ki})
	q ) \partial_k \Phi_i 	
	-
	\int_{\Omega}
	\D q \diver \Phi.
	\label{EQ67}
	\end{split}
\end{align}
Using \eqref{EQ66} and \eqref{EQ67} in \eqref{EQ65}, and then adding the resulting identity to \eqref{EQ60}, 
we obtain
\begin{align}
	\begin{split}
		&
	\frac{d}{dt}
	\left(		\int_{\tt} 
	\D \etaa_t \D \etaa
	+
	\int_{\Omega} 
	J_0 \D v_i 
	\Phi_i
	+
	\frac{1}{2}\int_{\tt} 
	|\nabla_{\bx} \D \etaa|^2
	+
	\frac{1}{2}
	\int_{\Omega}
	\vert \nabla \D (\Psi - \Psi (\tau))\vert^2
	\right)
		\\&\indeq
	+
	\int_{\tt} |\Delta_x \D \etaa|^2
	=
	\int_{\tt} | \D \etaa_t|^2
	+
	\int_{\Omega} J_0 |\D v|^2
	-
	\int_{\Omega} \nabla \D v_3 : \nabla 
	( 
	z \D \etaa (\tau))
		\\&\indeq
	-
	\int_{\Omega} \D 
	((J_0 a_{jl} a_{kl} - \delta_{jk})
	\partial_k v_i) \partial_j  \Phi_i 
	+ 
	\int_{\Omega}
	\D ((J_0 a_{ki} - \delta_{ki})  q) \pa_k \Phi_i
	+
	\int_{\Omega} \D q  \diver \Phi
	\\&\indeq
	+	
	\int_\Omega
	(J_0 \partial_tSv_i
	-S(J_0\partial_tv_i))\Phi_i
	,
	\label{EQ68}
	\end{split}
\end{align}
where the boundary terms cancel due to~\eqref{EQ64}.
We multiply \eqref{EQ68} by a sufficiently small constant $\lambda\in (0,1/C]$, to be determined in the next section, and add the resulting identity to \eqref{EQ59}, yielding
\begin{equation}
	\frac{d}{dt}
	\E_S(t,\tau)
	+
	\DD_S(t)
	\le \L_S(t,\tau)
	+
	\N_S(t,\tau)
	+
	\C_S(t),
	\label{EQ69}
\end{equation}
for each $S\in \mathcal F$, where
\begin{align}
\begin{split}
		\E_S(t,\tau)
	&:= \frac12
	\left(
	\Vert J_0^{1/2} \D v\Vert_{L^2}^2
	+
	\|Sh_t\|_{L^2}^2
	+
	\|\Delta_x Sh\|_{L^2}^2
	\right)
	+
	\frac{\lambda}{2}
	\|\nabla_x Sh\|_{L^2}^2
	\\&\quad
	+
	\frac{\lambda}{2}
	\|\nabla S(\Psi-\Psi(\tau))
	\|_{L^2}^2
	+
	\lambda\int_{\mathbb T^2} Sh_t\,Sh
	+\lambda\int_\Omega J_0Sv_i\Phi_i ,
	\label{EQ70}
\end{split}
\end{align}
\begin{align}
	\DD_S(t)
	&:= 
	\frac12\|S\nabla v\|_{L^2}^2
	+
	\frac12\|\nabla_x Sh_t\|_{L^2}^2
	+
	\lambda\|\Delta_x Sh\|_{L^2}^2
	,
	\llabel{EQ71}
\end{align}
\begin{equation}
	\begin{split}
		\L_S(t,\tau)
	:=
	-\lambda\int_\Omega \nabla Sv_3:\nabla(zSh(\tau))
	+\lambda\int_\Omega Sq\,\operatorname{div}\Phi
	=:\L_{S,1}+\L_{S,2},
	\label{EQ72}
	\end{split}
\end{equation}
\begin{align}
\begin{split}
		\N_S(t,\tau)
	&:= \int_\Omega
	(\delta_{jk}-J_0a_{jl}a_{kl})
	\partial_kSv_i\,\partial_jSv_i
	-\lambda\int_\Omega
	S
	((J_0a_{jl}a_{kl}-\delta_{jk})\partial_kv_i
	)
	\partial_j\Phi_i
	\\
	&\quad
	+\lambda\int_\Omega
	S ((J_0a_{ki}-\delta_{ki})q
	)
	\partial_k\Phi_i 
	+
	\int_\Omega
	 (J_0\partial_tSv_i-S(J_0 \partial_t v_i)
	 )Sv_i
	\\
	&\quad
	+\lambda
	\int_\Omega
	 ( J_0\partial_t
	Sv_i-S(J_0\partial_t v_i) )\Phi_i,
	\label{EQ73}
\end{split}
\end{align}
and
\begin{align}
	\begin{split}
	\C_S (t)
	&:= 
	\int_\Omega
	 (
	\partial_k (J_0 a_{ki}Sq)
	-
	S\partial_k(J_0 a_{ki}q)  )
	Sv_i
	+
	\int_\Omega
	(
	-S(J_0 a_{ki}
	\partial_k v_i)
	+
	J_0 a_{ki}\partial_k Sv_i
	) Sq
	\\&\indeq
	-\int_\Omega
	 (S(J_0a_{jl}a_{kl}\partial_kv_i)
	-J_0 a_{jl} a_{kl}\partial_k Sv_i
	)
	\partial_jSv_i
	=: 
	\C_{S,1}
	+
	\C_{S,2}
	+
	\C_{S,3}.
	\label{EQ74}
	\end{split}
\end{align}
In \eqref{EQ69}, we also used the Poincar\'e inequalities $\Vert \D v\Vert_{L^2} \leq C \Vert \nabla \D v\Vert_{L^2}$ and $\Vert \D \etaa_t \Vert_{L^2} \leq C \Vert \nabla \D \etaa_t\Vert_{L^2}$, which follow from \eqref{EQ17} and~\eqref{EQ22}.

\subsection{The total energy}
We define the total energy by
\begin{equation}
	X(t):=
	\sum_{\D\in F}
	\left(
	\|Sv\|_{L^2}^2
	+
	\|Sh_t\|_{L^2}^2
	+
	\|Sh\|_{H^2}^2
	\right),
	\llabel{EQ75}
\end{equation}
where the set \(F\) is introduced in~\eqref{EQ56}. The total energy
\(X(t)\) controls all the norms required in the nonlinear estimates.
More precisely, we claim that there exist sufficiently large constants
\(\bar{C},M\ge 1\) such that
\begin{equation}
	\|v\|_{H^3}^2
	+
	\|\nabla q\|_{H^1}^2
	+
	\|v_t\|_{H^2}^2
	+
	\|\nabla q_t\|_{L^2}^2
	\le 
	\bar{C} 
	X(t),
	\label{EQ76}
\end{equation}
for all \(t\ge 0\), as long as
\begin{equation}
	\mathcal{H}(t)
	:=
	\sup_{s\in[0,t]}
	\left(
	X^{1/2}(s)+\int_0^s X^{1/2}(\sigma)\,d\sigma
	\right)
	\le \frac{\gamma_0}{M},
		\label{EQ77}
\end{equation}
where \(\gamma_0>0\) is the fixed constant in Lemma~\ref{Lstokes}.
The proof of \eqref{EQ76} provided \eqref{EQ77} holds is similar to the proof of \eqref{EQ47} by the elliptic regularity and we present the details below.

Since the pair $(v(0), q (0))$ solves the Stokes system \eqref{EQ49} with the boundary conditions~\eqref{EQ17} and \eqref{EQ18}, Lemma~\ref{Lstokes0} implies that
\begin{align}
	\begin{split}
		\Vert v (0)\Vert_{H^2}
		+
		\Vert \nabla q (0)\Vert_{L^2}
		&\les
		\Vert v_{t} (0)\Vert_{L^2}
		+
		\Vert (I-  J_0 a) \nabla v (0)\Vert_{H^1}
		+
		\Vert \etaa_{t} (0)\Vert_{H^{3/2}}
		\\&\indeq
		+
		\Vert (I-J_0 a a^T) \nabla v (0)\Vert_{H^1}
		+
		\left\Vert (I-J_0 a) \left(
		q-\int_{\Omega} q
		\right) (0)
		\right\Vert_{H^1},
		\label{EQ77a}
	\end{split}
\end{align}
where we used the bound $\Vert h_0\Vert_{H^4}\les 1$.
By the Poincar\'e inequality, we have
\begin{align}
	\begin{split}
	\Vert v(0)\Vert_{H^2}
	+
	\Vert \nabla q(0)\Vert_{L^2}
	\les
	\Vert v_t (0)\Vert_{L^2}	
	+
	\Vert \etaa_t (0)\Vert_{H^{3/2}}.
	\label{EQ77b}
	\end{split}
\end{align}
where we used \eqref{EQJ}--\eqref{EQJ2} and the assumption that $\Vert h_0\Vert_{H^4}$ is sufficiently small.
A similar argument as in \eqref{EQ77a} and \eqref{EQ77b} implies that
\begin{align}
	\|v(0)\|_{H^3}
	+
	\|\nabla q(0)\|_{H^1}
	\les
	\|v_t(0)\|_{H^1}
	+
	\|h_t(0)\|_{H^{5/2}}.
		\label{EQ77c}
\end{align}

Note that the pair $(v_t (0), q_t (0))$ solves the Stokes systems \eqref{EQ52} with the boundary conditions~\eqref{EQ53}. Appealing to Lemma~\ref{Lstokes0}, we obtain
\begin{align}
\begin{split}
		\Vert v_t (0)\Vert_{H^2}
	+
	\Vert \nabla q_t (0)\Vert_{L^2}
	&\les
	\Vert v_{tt} (0)\Vert_{L^2}
	+
	\Vert \pt ((I-  J_0 a) \nabla v) (0)\Vert_{H^1}
	+
	\Vert \etaa_{tt} (0) \Vert_{H^{3/2}}
	\\&\indeq
	+
	\Vert \pt ((I-J_0 a a^T) \nabla v) (0)\Vert_{H^1}
	+
	\Vert \pt ((I- J_0 a) \nabla q) (0)\Vert_{L^2}.
	\label{EQ77d}
\end{split}
\end{align}
For the second term on the right-hand side, we use to \eqref{EQ30} to get
\begin{align}
	\begin{split}
	\Vert \pt ((I - J_0 a) \nabla v) (0)\Vert_{H^1}
	&\les	
	\Vert (I-  J_0 a) (0)\Vert_{H^2}
	\Vert \pt \nabla v (0)\Vert_{H^1}
	\\&\indeq
	+
	\Vert J_0 \pt a (0)\Vert_{H^2}
	\Vert \nabla v (0)\Vert_{H^1}.
	\label{EQ77e}
	\end{split}
\end{align}
The fourth and fifth terms on the right-hand side of \eqref{EQ77d} can be estimated in a similar fashion as~\eqref{EQ77e}.
Therefore, we appeal to \eqref{EQ77b} and arrive at
\begin{align}
\begin{split}
		\Vert v_t (0)\Vert_{H^2}
	+
	\Vert \nabla q_t (0)\Vert_{L^2}
	&\les
	\Vert v_{tt} (0)\Vert_{L^2}
	+
	\Vert v_t (0)\Vert_{L^2}
	+
	\Vert \etaa_t (0)\Vert_{H^{3/2}}
	+
	\Vert \etaa_{tt} (0)\Vert_{H^{3/2}}
	\\&
	\les
	X^{1/2} (0).
	\label{EQ78a}
\end{split}
\end{align}
From \eqref{EQ77c} and \eqref{EQ78a} it follows that
\begin{align}
	\|v(0)\|_{H^3}+\|\nabla q(0)\|_{H^1}
	\lesssim 
	X^{1/2}(0).
	\label{EQ78b}
\end{align}
Combining \eqref{EQ78a} with \eqref{EQ78b}, we infer that \eqref{EQ76} holds at time $t=0$ for some constant $\bar{C}\geq 1$.

Choosing \(M\ge 1\) sufficiently large, we deduce by \eqref{EQ76}, \eqref{EQ77}, and a continuity argument that
\begin{align}
	\left(
	\|v\|_{L^1H^3}
	+\|v\|_{L^\infty H^3}
	+\|\nabla q\|_{L^\infty L^2}
	\right)
	\left(
	1+\|v\|_{L^1H^3}
	\right)
	\leq \gamma_0,
	\llabel{EQ78c}
\end{align}
on some time interval $[0,t]$, where $t>0$.
Therefore, by taking $\Vert h_0\Vert_{H^4}\leq \gamma_0$, we infer from  Lemma~\ref{Lstokes} that \eqref{EQ47} holds.
Taking \(\bar{C}\) larger than \(2C\), where $C$ is the constant appearing on the right-hand side of \eqref{EQ47},
a standard bootstrap
argument yields that \eqref{EQ76} continues to hold as long as \eqref{EQ77} is satisfied.

From here on, the constant \(\bar{C}\) in \eqref{EQ76} is fixed for the
remainder of the paper, and the value of \(M\) in \eqref{EQ77} shall be
determined in the next section.

\section{Proof of the main result}
\label{sec04}
In this section, we prove Theorem~\ref{T01}.
We recall that \eqref{EQ76} remains valid for all $t\geq 0$, as long as the bootstrap assumption \eqref{EQ77} holds.
It is clear that 
\begin{align}
	\Vert \etaa_0 \Vert_{H^4}
	\leq 
	\mathcal{H} (0) 
	\leq \mathcal{H}(t).
	\llabel{EQ81a}
\end{align}
From \eqref{EQ76} it follows that 
\begin{align}
(	\Vert v\Vert_{L^1 H^3}
	+
	\Vert v\Vert_{L^\infty H^3}
	+
	\Vert \nabla q\Vert_{L^\infty L^2})
	(1+ \Vert v\Vert_{L^1 H^3})
	\leq
	C \mathcal{H} (t)
	+
	C \mathcal{H}^2 (t)
	\leq 
	C\mathcal{H} (t).
	\llabel{EQ81b}
\end{align}
Taking $M\geq 1$ sufficiently large so that 
\begin{align}
	C\mathcal{H}(t) 
	\leq 
	\frac{C \gamma_0}{M}
	\leq 
	\gamma_0,
	\llabel{EQ34b}
\end{align}
we see that the hypotheses \eqref{EQ34a} and \eqref{EQ34} of
Lemma~\ref{L03} are satisfied.
Therefore, we obtain
\begin{align}
	\begin{split}
		\Vert a_t\Vert_{L^\infty H^2}
		+
		\Vert I - J_0 a\Vert_{L^\infty H^2}
		+
		\Vert I - J_0 a a^T\Vert_{L^\infty H^2}
		+
		\Vert \nabla \Psi\Vert_{L^\infty H^2}
		\leq 
		C \mathcal{H}(t).
		\label{EQ81}
	\end{split}
\end{align}

\subsection{Pressure estimates}
First, we provide some necessary lower-order pressure estimates.
Note that the pressure can be uniquely determined due to Remark~\ref{R01}.
Let
\begin{align}
	\tilde{q} (x,z,t) := q(x,1,t)
	\inin{\Omega\times (0,T)}
	\llabel{EQ82}
\end{align}
and
\begin{align}
	\bar{q} (x,t) := q(x,1,t)
	\inin{\tt\times (0,T)}
   \llabel{EQ83}
\end{align}
be the restrictions of the pressure to the upper boundary $z=1$. 
Using the Poincar\'e inequality, we get
\begin{align}
	\begin{split}
		\Vert q\Vert_{L^2 (\Omega)}
		&\les 
		\Vert q- \tilde{q}\Vert_{L^2 (\Omega)}
		+
		\Vert \tilde{q}\Vert_{L^2 (\Omega)}	
		\les
		 \Vert \partial_3 (q-\tilde{q})\Vert_{L^2 (\Omega)}
		+
		\Vert \bar{q}\Vert_{L^2 (\tt)}
		\\&
		\les 
		\Vert \nabla q\Vert_{L^2 (\Omega)}
		+
		\Vert \bar{q}\Vert_{L^2 (\tt)}.
		\label{EQ84}
	\end{split}
\end{align}
The second term on the far right-hand side of \eqref{EQ84} can be estimated as
\begin{align}
	\begin{split}
		\Vert \bar{q}\Vert_{L^2 (\tt)}	
		&\leq
		\left
		\Vert 
		\bar{q} - \int_{\tt} \bar{q}
		\right\Vert_{L^2 (\tt)}
		+
		\left\Vert 
		\int_{\tt} \bar{q} 
		\right\Vert_{L^2 (\tt)}
		:= 
		J_1+J_2
		.
		\label{EQ85}
	\end{split}
\end{align}
For the term $J_1$, the Poincar\'e and trace inequalities imply
\begin{align}
	\begin{split}
		J_1	 
		\les
		 \Vert \nabla_x \bar{q}\Vert_{L^2 (\tt)}
		\les
		\Vert \nabla q\Vert_{H^1 (\Omega)}
		\les 
		X^{1/2},
		\label{EQ86}
	\end{split}
\end{align}
while for the term $J_2$, we appeal to \eqref{EQ23}, \eqref{EQ81}, and the trace inequality to obtain
\begin{align}
	\begin{split}
		J_2
		&
		\les
		\int_{\tt} 
		|J_0 a a^T \nabla v| 
		\big|_{z=1}
		\les
		\left(\int_{\tt} 
		|J_0 a a^T \nabla v|^2 \big|_{z=1}
		\right)^{1/2}
		\les 
		\Vert J_0 a a^T \nabla v\Vert_{H^1 (\Omega)}
		\les 
		X^{1/2} 
		,
		\label{EQ87}
	\end{split}
\end{align}
where we used the bound $\mathcal{H} (t)\les 1$.
From \eqref{EQ76} and \eqref{EQ84}--\eqref{EQ87}, we infer that
\begin{align}
	\Vert q\Vert_{H^2}
	\les
	X^{1/2}.
	\label{EQ88}
\end{align}

Now we estimate the term $\Vert q_t\Vert_{L^2}$.
We differentiate \eqref{EQ24}--\eqref{EQ26} in time to obtain
\begin{align}
	q_t 
	= 
	(\Q)_t + c'(t),
	\label{EQ89}
\end{align}
with
\begin{align}
	\int_{\Omega}
	(\Q)_t \,dxdz=0
	\label{EQ90}
\end{align}
and
\begin{align}
	c'(t)= \int_{\tt}
	\left(
	\pt
	(J_0 a_{kl} a_{3l} \pa_k v_3) 
	- 
	(\Q)_t
	\right)
	\big|_{z=1}
	\,dx.
	\label{EQ91}
\end{align}
Using \eqref{EQ89}, \eqref{EQ90}, and the Poincar\'e inequality, we get
\begin{align}
	\begin{split}
		\Vert (\Q)_t\Vert_{L^2 }
		\les 
		\Vert \nabla (\Q)_t \Vert_{L^2 }
		=
		\Vert \nabla q_t\Vert_{L^2 }
		\les
		X^{1/2}.	
		\label{EQ92}
	\end{split}
\end{align}
From \eqref{EQ81}, \eqref{EQ91}, and \eqref{EQ92}, it follows that
\begin{align}
	\begin{split}
		|c'(t)| 
		&
		\les
		\Vert \pt (J_0 a a^T \nabla v) |_{z=1} \Vert_{L^2 (\tt)}
		+
		\Vert (\Q)_t |_{z=1}\Vert_{L^2 (\tt)}
		\\&
		\les 
		\Vert \pt (J_0 a a^T \nabla v)\Vert_{H^1 (\Omega)}
		+
		\Vert (\Q)_t\Vert_{H^1 (\Omega)}
		\les X^{1/2}
		,
		\label{EQ93}
	\end{split}
\end{align}
where we used the trace inequality.
Finally, we appeal to \eqref{EQ76}, \eqref{EQ89}, \eqref{EQ92}, and \eqref{EQ93}, obtaining
\begin{align}
	\Vert q_t\Vert_{H^1}
	\les
	X^{1/2}.
	\label{EQ96}
\end{align}

\subsection{Proof of the exponential decay}
We sum the energy inequality \eqref{EQ69} for all $\D \in  \F$ and integrate in time from $\tau$ to $t$, yielding
\begin{align}
	\begin{split}
		\sum_{\D \in 
			\F
		}	
		\E_{\D} (t, \tau)
		+ 
		\sum_{\D\in \F}
		\int_{\tau}^{t} \DD_{\D} 
		\,ds
		\leq 
		\sum_{\D \in  \F}
		\left.
		\E_{\D} (t, \tau) 
		\right|_{t=\tau}
		+
		\sum_{\D \in  \F}	
		\int_{\tau}^{t} 
		(\L_{\D} 
		+ 
		\N_{\D} 
		+ 
		\C_{\D})
		\,ds.
		\label{EQ97}
	\end{split}
\end{align}
For the last term of $\E_{\D} (t,\tau)$ in \eqref{EQ70},
we appeal to Young's and Poincar\'e inequalities, obtaining
\begin{align}
	\begin{split}
		\lambda 
		\left|
		\int_{\Omega}
		\D v_i \Phi_i
		\right|
		&
		\leq
		\lambda 
		\left|
		\int_{\Omega}
		\D v_i	(\D \Psi_i  
		- 
		\D \Psi_i (\tau))
		\right|
		+ 
		\lambda 
		\left| 
		\int_{\Omega} \D v_3 z\D \etaa (\tau)
		\right|
		\\&
		\leq
		\lambda 
		C_{\delta}
		\Vert \D v\Vert_{L^2}^2
		+ 
		C \delta
		\lambda
		\Vert \nabla (\D \Psi - \D \Psi (\tau)) \Vert_{L^2}^2
		+
		C
		\lambda 
		\Vert \nabla \D \etaa (\tau)\Vert_{L^2}^2,
		%		+ 
		%		\lambda \left| 
		%		\int_{\Omega} \D v_3 z\D \etaa (\tau)
		%		\right|
		\label{EQ98}
	\end{split}
\end{align}
for any $\delta \in (0,1]$.
Taking $\delta >0$ sufficiently small, we have
\begin{align}
	\begin{split}
		\frac{\lambda}{2}
		\Vert \nabla \D (\Psi - \Psi (\tau))\Vert_{L^2}^2
		+ 
		\lambda 
		\int_{\Omega}	
		\D v_i \Phi_i
		&\geq
		\frac{\lambda}{4}
		\Vert \nabla \D (\Psi - \Psi (\tau))\Vert_{L^2}^2
		-C \lambda \Vert \D v\Vert_{L^2}^2
		\\&\indeq
		-C \lambda \Vert \nabla \D \etaa (\tau)\Vert_{L^2}^2.
		\label{EQ99}
	\end{split}
\end{align}
For the second to last term of $\E_{\D} (t,\tau)$ in \eqref{EQ70}, using the Young's inequality, we have
\begin{align}
	\begin{split}
		\lambda 
		\left|
		\int_{\tt} \D \etaa_t \D \etaa
		\right|	
		\leq
		\lambda \Vert \D \etaa_t\Vert_{L^2}^2
		+
		\lambda \Vert \D \etaa\Vert_{L^2}^2.
		\label{EQ100}
	\end{split}
\end{align}
From \eqref{EQ99} and \eqref{EQ100}, it follows that
\begin{align}
	\begin{split}
		\sum_{\D \in \F}
		\E_{\D} (t,\tau)	
		&
		\geq 
		\frac{1}{C}
		\sum_{\D \in \F}
		(\Vert \D v\Vert_{L^2}^2
		+
		\Vert \D \etaa_t\Vert_{L^2}^2
		+
		\Vert \D \etaa\Vert_{H^2}^2
		)
		+
		\frac{\lambda}{C}
		\sum_{S\in \{\pt,\pa_{tt}, \bp\pt \}}
		\Vert \nabla \D \Psi \Vert_{L^2}^2
		\\&\indeq
		+
		\frac{\lambda}{C}
		\sum_{\D \in \{\bp, \bp^2\}}
		\Vert \nabla \D (\Psi - \Psi (\tau))\Vert_{L^2}^2
		-
		C \lambda 
		\sum_{\D \in \F}
		\Vert  \D \etaa (\tau)\Vert_{H^2}^2,
		\label{EQ101}
	\end{split}
\end{align} where we used the $H^2$-elliptic regularity.
For the term $\E_{\D} (t,\tau) |_{t=\tau}$, similar arguments as in \eqref{EQ98} and \eqref{EQ100} reveal
\begin{align}
	\begin{split}
		\sum_{\D \in \F}
		\left.
		\E_{\D} (t,\tau) \right|_{t=\tau}
		&\leq
		C	
		\sum_{\D \in F}
		(\Vert \D v (\tau)\Vert_{L^2}^2
		+
		\Vert \D \etaa_t (\tau)\Vert_{L^2}^2
		+
		\Vert \D \etaa (\tau) \Vert_{H^2}^2
		)
		\\&\indeq
		+
	C\lambda
		\sum_{\D \in \{\pt, \pa_{tt},\bp \pt\}}
		\Vert \nabla \D \Psi (\tau) \Vert_{L^2}^2.
		\label{EQ102}	
	\end{split}
\end{align}
From \eqref{EQ76}, \eqref{EQ97}, \eqref{EQ101}, and \eqref{EQ102}, we deduce that
\begin{align}
	\begin{split}
		&
		X(t)
		+
		\lambda \Vert \nabla  \bp( \Psi - \Psi (\tau))
		\Vert_{L^2}^2
		+
		\lambda \Vert \nabla  \bp^2 ( \Psi - \Psi (\tau))
		\Vert_{L^2}^2
		\\&\indeq
		+
		\sum_{\D \in \F}
		\int_{\tau}^{t}
		(	\Vert \D  v\Vert_{H^1}^2
		+
		\Vert  \D \etaa_t\Vert_{H^1}^2
		+
		\lambda
		\Vert \D \etaa\Vert_{H^2}^2)
		\,ds
		\\&
		\leq
		C X(\tau)
		+  
		C \lambda
		\bigl(
		(
		\Vert \nabla v (\tau) \Vert_{L^2}^2
		+
		\Vert \nabla \bp v(\tau) \Vert_{L^2}^2
		+
		\Vert \nabla v_t (\tau) \Vert_{L^2}^2
		)
		\\&\indeq\indeq\indeq\indeq\indeq\indeq\indeq\indeq
		-
		(
		\Vert \nabla v (t) \Vert_{L^2}^2
		+
		\Vert \nabla \bp v(t) \Vert_{L^2}^2
		+
		\Vert \nabla  v_t (t) \Vert_{L^2}^2
		)
		\bigr)
		\\&\indeq
		+
		C\sum_{\D \in \F}
		\int_{\tau}^{t}
		(
		\L_{\D} + \N_{\D} + \C_{\D}
		)\,ds.
		\label{EQ103}
	\end{split}
\end{align}
Using \eqref{EQ76} and the Fundamental Theorem of Calculus, the second term on the right-hand side of \eqref{EQ103} is bounded by
\begin{align}
	\begin{split}
		&
		C\lambda
		\int_{\tau}^{t}
		\left(
		\Vert \nabla v\Vert_{L^2}
		\Vert \nabla v_t\Vert_{L^2}
		+	
		\Vert \nabla \bp v\Vert_{L^2}
		\Vert \nabla \bp v_t\Vert_{L^2}
		+	
		\Vert \nabla  v_t \Vert_{L^2}
		\Vert \nabla  v_{tt} \Vert_{L^2}
		\right)
		\,ds
		\\&\indeq
		\leq 
		C \delta
		\int_{\tau}^{t}
		(	\Vert \nabla v_t\Vert_{L^2}^2
		+
		\Vert \nabla \bp v_t\Vert_{L^2}^2
		+
		\Vert \nabla v_{tt}\Vert_{L^2}^2
		)
		\,ds
		+
		C_\delta
		\lambda^2 
		\int_{\tau}^{t}
		X(s) \,ds
		\\&\indeq
		\leq
		C\delta 
		\sum_{\D \in \F}
		\int_{\tau}^{t} \DD_{\D}
		\,ds
		+
		C_\delta
		\lambda^2 
		\int_{\tau}^{t}
		X(s) \,ds,
		\label{EQ104}
	\end{split}
\end{align}
for any $\delta \in (0,1]$, where we used Young's inequality in the last step.
Note that \eqref{EQ76} is valid for $t\geq 0$ as long as the bootstrap assumption \eqref{EQ77} holds.
In order to estimate the last term on the right-hand side of \eqref{EQ103}, we use the following lemma, the proof of which is given below.

\colb
\begin{Lemma}
	\label{L04}
	For any $\delta\in (0,1)$ and $\tau\in [0,t]$, we have
	\begin{align}
		\int_\tau^t 
		(\L_{\bp^2}
		+
		\L_{\bp})
		\,ds
		&\les
		\delta 
		\sum_{\D \in  \F}
		\int_\tau^t \DD_{\D}\,ds
		+
		C_\delta \lambda^2 (t-\tau) X(\tau)
		+
		C_\delta \mathcal{H} O(X),
		\label{EQ105}
	\end{align}
with
	\begin{align}
		\int_\tau^t 
		(\N_{\pa_{tt}} + \L_{\pa_{tt}}) 
		\,ds
		&\les
		(\delta + \mathcal{H}) 	\sum_{\D \in  \F}
		\int_\tau^t \DD_{\D} \,ds
		%		+
		%		C_\delta \lambda^2 (t-\tau) X(\tau)
		+
		C_\delta
		\mathcal H O(X)
		\label{EQ106}
	\end{align}
	and
	\begin{align}
		\int_\tau^t 
		\left(
		\L_{\pt}
		+
		\L_{\pt\bp}
		+
		\sum_{\D \in \{\bp,\pt,\pt\bp,\bp^2\}}
		\N_{\D}
		+
		\sum_{\D\in F}
		\C_{\D}
		\right) 
		\,ds
		\les
			\delta \int_{\tau}^{t}
		\DD_{\pa_{tt}}
		\,ds
		+
		C_\delta
		\mathcal{H} O(X)
		\label{EQ107}
	\end{align}
	where $O(X)$ denotes any term involving powers of $\lambda$, $C$, and $(t-\tau)$, and at least two factors chosen from $\{X^{1/2}(\tau), X^{1/2}(t), \int_\tau^t X^{1/2}(s)\,ds\}$ or one factor of the form $\int_{\tau}^{t} X(s)\,ds$.
\end{Lemma}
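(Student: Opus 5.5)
The plan is to estimate each of the terms $\L_{S}$, $\N_S$, $\C_S$ separately. Throughout we use \eqref{EQ76}, the pressure bounds \eqref{EQ88} and \eqref{EQ96}, the geometric smallness \eqref{EQ81}, and the explicit structure of the test function $\Phi$ in \eqref{EQ61}.

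\textbf{Cubic terms.} The terms $\N_S$ and $\C_S$ are genuinely nonlinear. In $\N_S$ every integrand contains one of the factors $I-J_0aa^T$, $J_0a-I$, or a commutator of $S$ with $J_0$ or $J_0 aa^T$. In $\C_S$ the commutators produce at least one factor $Sa$, $S(aa^T)$, or $\pt a$, and these are bounded by $\nabla S\Psi$ or $\nabla v$. We bound such a product by H\"older and Sobolev ($H^2\subset L^\infty$, $H^1\subset L^4$). The geometric factor is estimated in $L^\infty H^2$ by $\mathcal{H}$, using \eqref{EQ81}, or in $H^2$ by $\Vert v\Vert_{H^3}\les X^{1/2}$, using \eqref{EQ36}. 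The two remaining factors are controlled by $X^{1/2}$ via \eqref{EQ76}, \eqref{EQ88}, \eqref{EQ96}. For $\Phi$ we use $\Vert\nabla \Phi\Vert_{L^2}\les \Vert \nabla S\Psi(t)\Vert+\Vert\nabla S\Psi(\tau)\Vert+\Vert S h(\tau)\Vert_{H^1}$. For $S\in\{\pt,\pt\bp,\pt^2\}$ we have $S\Psi(\tau)=0$, and $S\Psi$ is $\pt^{k-1}\bp^{j} v$. For $S\in\{\bp,\bp^2\}$ we use $\nabla S\Psi\les \Vert\nabla\Psi\Vert_{H^2}\les\mathcal{H}$. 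After integrating in time, each term is therefore of the form $\mathcal{H}\int_\tau^t X\,ds$ or $\mathcal{H}\,X^{1/2}(\tau)\int_\tau^tX^{1/2}$, which is exactly $\mathcal{H}O(X)$. The top-order exception is $S=\pt^2$: there the quantities $v_{tt}$ and $\nabla v_{tt}$ are not in $X$ but only in the dissipation $\DD_{\pt^2}$. For those pieces we use Young's inequality, $\mathcal{H}\Vert\nabla v_{tt}\Vert\,X^{1/2}\le \delta\DD_{\pt^2}+C_\delta\mathcal{H}^2X$, or keep $\mathcal{H}\DD$ as in \eqref{EQ106}. In $\C_{\pt^2,1}$, the pressure $q_{tt}$ is not controlled, so we integrate by parts in $x_k$ and use the Piola identity \eqref{EQ31}. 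This moves the derivative onto $\pt^2 v$ and a factor $\pt(J_0a)$, and the boundary terms vanish or are handled by \eqref{EQ18}. The commutator then contains only $q_t$, $q$, and $\pt^2 a\sim\nabla v_t$, giving $\les \Vert\nabla v\Vert_{H^2}(\ldots)\Vert \nabla v_{tt}\Vert$, which is absorbed into $\delta\DD_{\pt^2}$.

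\textbf{Linear terms $\L_S$.} For time-derivative operators $S$ we have $Sh(\tau)=0$, so $\L_{S,1}=0$. In $\L_{S,2}$, $\diver\Phi=\diver S\Psi=Sv$-type quantities, and by \eqref{EQ57}, $\diver Sv$ equals $(I-J_0a):\nabla Sv$ plus commutators, all quadratic. This gives $\mathcal{H}O(X)$, or $\mathcal{H}\DD$ for $\pt^2$, using $q_{tt}$ only after writing $\lambda\int q_{tt}\diver\Phi$ through the identity $\diver\Phi=\diver \pt v$. For $S\in\{\bp,\bp^2\}$ we argue differently. For $\L_{S,1}$, Cauchy--Schwarz and Young give $\lambda\Vert\nabla Sv\Vert\Vert Sh(\tau)\Vert_{H^1}\le\delta\DD_S+C_\delta\lambda^2X(\tau)$, whose integral is $C_\delta\lambda^2(t-\tau)X(\tau)$. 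For $\L_{S,2}$ we write $\diver\Phi=\diver S(\Psi-\Psi(\tau))+S h(\tau)$, move one $\bp$ off $q$ when $S=\bp^2$, and use $\diver\Psi=$ the nonlinear remainder from $\det\nabla\psi=J_0$. This gives $\diver\Psi$ equal to $h_0$ plus quadratic terms in $\nabla\Psi$, hence $\diver S(\Psi-\Psi(\tau))=\int_\tau^t\diver Sv$, which is again quadratic. The remaining piece $\lambda\int Sq\, Sh(\tau)$ is handled by Young's inequality with $\Vert q\Vert_{H^2}\les X^{1/2}$ from \eqref{EQ88}.

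\textbf{Main obstacle.} The main obstacle is this last term. Since $X^{1/2}$ bounds $Sq$ but not by the dissipation, we must obtain the form $C_\delta\lambda^2(t-\tau)X(\tau)+\delta\int\DD$. We do this by trading $\bp q$ for $v$ through \eqref{EQ14}, writing $\nabla q$ as $-J_0\pt v+\Delta v$ plus small terms. The $\pt v$ contribution is integrated in time onto $Sh(\tau)$, which is constant in $t$, producing boundary terms $O(X)$. The $\Delta v$ part pairs with $Sh(\tau)$ to give $\delta\DD+C_\delta\lambda^2X(\tau)$.
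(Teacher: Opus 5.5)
Your resolution of what you call the main obstacle does not work. After substituting $\partial_1 q=-J_0\pt v_1+\Delta v_1+\dots$ from \eqref{EQ14} (say $\bp=\partial_1$), the piece $\lambda\int_\Omega \Delta v_1\,Sh(\tau)$ contains $\partial_3^2 v_1$. The dissipation $\DD_S$ contains only first derivatives $\nabla Sv$ of tangentially differentiated velocities, so it never controls $\partial_3^2 v$. Since $Sh(\tau)$ does not depend on $z$, integrating by parts in $z$ only gives $\int_{\tt}Sh(\tau)\,[\partial_3 v_1]_{z=0}^{z=1}$. These are traces of the normal derivative, which $\DD$ does not bound either. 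Rewriting $\partial_3^2 v_1$ through the momentum equation just returns $\partial_1 q$, so the argument is circular.

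The missing ingredient is the Stokes estimate \eqref{EQ116}. Apply Lemma~\ref{Lstokes0} to \eqref{EQ49}, using $\Vert v_t\Vert_{H^1}\les \DD_{\pt}^{1/2}$ and the boundary-datum bound $\Vert h_t\Vert_{H^{5/2}}\les \Vert \bp^2 v\Vert_{H^1}\les \DD_{\bp^2}^{1/2}$ (from \eqref{EQ18} and \eqref{EQ22}). This gives $\Vert v\Vert_{H^3}+\Vert \nabla q\Vert_{H^1}\les \sum_{S\in F}\DD_S^{1/2}+\mathcal H X^{1/2}$. So, contrary to your premise, $Sq$ for $S\in\{\bp,\bp^2\}$ is controlled by the dissipation. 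Young's inequality, $\lambda\Vert\nabla q\Vert_{H^1}X^{1/2}(\tau)\le \delta\sum_S\DD_S+C_\delta\lambda^2X(\tau)+C\lambda\mathcal H X^{1/2}X^{1/2}(\tau)$, then gives \eqref{EQ105} with no substitution at all. Also, do not integrate the $\pt v$ piece in time. Its endpoint terms have size $\lambda X^{1/2}(t)X^{1/2}(\tau)$ with no factor $\mathcal H$, so they are not of the form allowed in \eqref{EQ105}. Bound it directly by $\lambda\Vert v_t\Vert_{L^2}\Vert Sh(\tau)\Vert_{L^2}\le\delta\DD_{\pt}+C_\delta\lambda^2X(\tau)$.

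There are two further gaps in the cubic terms. First, $q_{tt}$ is controlled neither by $X$ nor by $\DD$, yet it appears in three places:
\begin{itemize}
\item in $\L_{\pt^2}=\lambda\int_\Omega q_{tt}\diver \pt v$;
\item in the pressure term $\lambda\int_\Omega\pt^2((J_0a_{ki}-\delta_{ki})q)\,\partial_k\pt v_i$ of $\N_{\pt^2}$;
\item in $\C_{\pt^2,2}$.
\end{itemize}
Your H\"older argument based on \eqref{EQ88} and \eqref{EQ96} does not reach these terms. Your spatial integration by parts is aimed at $\C_{\pt^2,1}$, where $q_{tt}$ actually cancels inside the commutator. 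In the three terms above it would only put a derivative on $q_{tt}$. What is needed is integration by parts in time. This produces endpoint terms $\int_\Omega q_t\,\pt(\cdots)\big|_\tau^t=\mathcal H O(X)$ and bulk terms $\int_\tau^t\int_\Omega q_t\,\pt^2(\cdots)$. In the bulk terms $\nabla v_{tt}$ appears linearly and is absorbed into $\delta\DD_{\pt^2}$ by Young's inequality, with $a_{tt}$ handled by \eqref{EQ128}.

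Second, for $S\in\{\bp,\bp^2\}$ you bound $\nabla S\Psi(t)$ and $\nabla S\Psi(\tau)$ separately by $\mathcal H$. In $\N_S$ this produces terms like $\lambda\mathcal H^2\int_\tau^t X^{1/2}\,ds$, which contain only one admissible factor and so are not $\mathcal H O(X)$. They would also prevent decay, since $\mathcal H$ itself does not decay. You must keep the difference and use $\Vert\Phi\Vert_{H^1}\les\int_\tau^t\Vert v\Vert_{H^3}\,ds+\Vert h(\tau)\Vert_{H^3}$ as in \eqref{EQ134}, exactly as you already do for $\diver\Phi$ in $\L_{S,2}$.
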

\colb
Using \eqref{EQ103}--\eqref{EQ107} with choosing $\delta>0$ sufficiently small and $\M\geq 1$ sufficiently large, the dissipation terms on the right-hand sides of \eqref{EQ104}--\eqref{EQ107}  
are absorbed into the left-hand side of~\eqref{EQ103}.
Thus we obtain
\begin{align}
	\begin{split}
		&
		X(t)
		+
		\lambda \int_{\tau}^{t}
		X(s) ds
		\leq
		C(1+ \lambda^2 (t-\tau)) X(\tau)
		+
		C \lambda^2 \int_{\tau}^{t} X(s) \,ds
		+
		\mathcal{H} 
		O(X),
		\label{EQ108}
	\end{split}
\end{align}
where we used
\begin{align}
	\lambda
	\int_{\tau}^{t} X (s)\,ds
	\leq
	\frac{1}{C}
	\sum_{\D \in  \F} 
	\int_{\tau}^{t}
	\DD_{\D} \,ds.
	\llabel{EQ109}
\end{align}
We now fix $M\geq 1$ and
\begin{align}
	\lambda
	=
	\frac{1}{500C^2},
	\llabel{EQ110}
\end{align}
where $C\geq 1$ is the constant in~\eqref{EQ108}. 
Note that \eqref{EQ108} is valid for $t\geq 0$ provided \eqref{EQ77} holds. 
Using Lemma~\ref{LODE}, there exists $\eps>0$ such that if $X(0)\leq \eps$, then both \eqref{EQ77} and
\begin{align}
	X(t) 
	\leq 
	30C 
	X(0)
	e^{-t/1000C^3}
	\llabel{EQ111}
\end{align}
hold for all $t\geq 0$. 
From \eqref{EQ76}, \eqref{EQ88}, and \eqref{EQ96}, we deduce that 
$Y(t)$ is equivalent to $X(t)$, up to a universal constant uniformly for any $t\in [0,T)$. 
The proof of the exponential decay estimate \eqref{EQ29} is thus completed.

It remains to prove Lemma~\ref{L04} under the bootstrap assumption~\eqref{EQ77}.

\subsection{Proof of \eqref{EQ105}}
Using the Young's inequality, we obtain (recall the notation in \eqref{EQ72})
\begin{align}
	\begin{split}
	\L_{\bp^2,1}
	+
	\L_{\bp,1}
	&\les\lambda \Vert \bp^2 v\Vert_{H^1}
	\Vert \etaa (\tau)\Vert_{H^3}
	+
	\lambda 
	\Vert \bp v\Vert_{H^1}
	\Vert \etaa (\tau)\Vert_{H^2}
	\\&
	\les
	\delta 
	(\Vert \bp^2 v\Vert_{H^1}^2
	+
	\Vert \bp v\Vert_{H^1}^2)
	+
	C_\delta	
	\lambda^2 \Vert 
	\etaa (\tau)
	\Vert_{H^3}^2,
	\llabel{EQ112}
	\end{split}
\end{align}
for any $\delta \in (0,1]$.
We integrate in time from $\tau$ to $t$ to get
\begin{align}
	\begin{split}
	\int_{\tau}^{t}
	(\L_{\bp^2,1}
	+
	\L_{\bp,1})
	\,
	ds
	\les
	\delta 
\sum_{\D \in \F}
\int_{\tau}^{t}
\DD_{\D}  \, ds
	+
	C_\delta (t-\tau)
	\lambda^2 
	 X(\tau).
	\label{EQ113}
	\end{split}
\end{align}
For $\D \in \{\bp^2, \bp\}$, the Fundamental Theorem of Calculus and \eqref{EQ15} imply
\begin{align}
	\begin{split}
	\diver  \D	(\Psi - \Psi (\tau))
	=
	\int_{\tau}^t \D ( \delta_{ki} \pa_k v_i)
	\,ds
	=
	\int_{\tau}^t \D ( (\delta_{ki} - J_0 a_{ki}) \pa_k v_i)
	\,ds,
	\llabel{EQ114}
	\end{split}
\end{align}
leading to
\begin{align}
	\begin{split}
	\L_{\bp^2,2}
	+
	\L_{\bp,2}
	&
	\les
	\lambda	
	\Vert \nabla q\Vert_{H^1} 
(	\Vert \diver \bp^2 (\Psi - \Psi (\tau))\Vert_{L^2}
	+
	\Vert \diver \bp (\Psi - \Psi (\tau))
	\Vert_{L^2}
	+
	\Vert \etaa (\tau)\Vert_{H^2}
	)
	\\&
	\les
	\lambda 
	\Vert \nabla q\Vert_{H^1}
	\left(
	X^{1/2}(\tau)
	+
	\int_{\tau}^{t}  \Vert 
	(I - J_0 a) \nabla v
	\Vert_{H^2}
	\,ds
	\right)
	\\&
	\les
	\lambda
		\Vert \nabla q\Vert_{H^1}
	\left(
	X^{1/2} (\tau) 
	+ 
	\mathcal{H} \int_{\tau}^{t} X^{1/2}\,ds
	\right).
	\label{EQ115}
	\end{split}
\end{align}
A similar proof as in \eqref{EQ50} by using \eqref{EQ81} gives
\begin{align}
\begin{split}
		\Vert v\Vert_{H^3}
	+
	\Vert \nabla q\Vert_{H^1}
	&\les
	\Vert v_t\Vert_{H^1}
	+
	\Vert \bp^2 v\Vert_{H^1}
	+
	 \mathcal{H}
	(\Vert v\Vert_{H^3} + \Vert \nabla q\Vert_{H^1})
	\les
	 \sum_{\D \in \F} \DD_{\D}^{1/2} 
	+
	 \mathcal{H} 
	X^{1/2},
	\label{EQ116}
\end{split}
\end{align}
where we used the trace inequality
\begin{align}
	\Vert v\Vert_{H^{5/2} (\Gamma_1)}
	\les 
	\Vert \bp^2 v\Vert_{H^{1/2} (\Gamma_1)}
	\les
	\Vert \bp^2 v\Vert_{H^1 (\Omega)}
	\llabel{EQ117}
\end{align}
due to \eqref{EQ18} and~\eqref{EQ22}.
Using \eqref{EQ115} and \eqref{EQ116}, we arrive at
\begin{align}
\begin{split}
	&\int_{\tau}^{t} 
	(\L_{\bp^2,2}
	+ 
	\L_{\bp,2})
	\,
	ds
	\les
	\lambda \int_{\tau}^{t} 
		\Vert \nabla q\Vert_{H^1}
		\left(
		X^{1/2} (\tau)
		+ 
		\mathcal{H}  \int_{\tau}^{s} X^{1/2}
		\right)
	\,
		ds
	\\&\indeq
	\les
	\delta 
	\sum_{\D \in F}
	\int_{\tau}^{t}
	\DD_{\D} \, ds
	+
	C_\delta \lambda^2 
	\int_{\tau}^{t}
	\left(	
	X(\tau)
	+
	\mathcal{H}^2 
	\left(	\int_{\tau}^{s}
	X^{1/2}
	\right)^2
	\right)
	\,
	ds
	\\&\indeq\indeq
	+ 
	\mathcal{H} 
	\left(
	X^{1/2} (\tau)
	\int_{\tau}^{t}  X^{1/2}
	\,
	ds
	+  
	\mathcal{H}
	\left(
	\int_{\tau}^{t} X^{1/2} 
	ds
	\right)^2
	\right)
	\\&\indeq
	\les
	\delta 
	\sum_{\D \in \F}
	\int_{\tau}^{t}
	\DD_{\D}  \, ds
	+
	C_\delta \lambda^2 (t-\tau) X(\tau)
	+
	C_\delta \mathcal{H} O(X),
	\label{EQ118}
\end{split}
\end{align}
where $\delta \in (0,1]$,
and \eqref{EQ105} follows by combining~\eqref{EQ113}
and~\eqref{EQ118}.

\subsection{Proof of \eqref{EQ106}}
For $\N_{\pa_{tt}}$, we rewrite \eqref{EQ73} as
\begin{align}
	\begin{split}
	\N_{\pa_{tt}} 
	&=	
	\lambda 
	\int_{\Omega}
	\pa_{tt} ((J_0 a_{ki} - \delta_{ki}) q) \pa_k \pt v_i 
	+
	\lambda 
	\int_{\Omega} 
	\pa_{tt} 
	(( \delta_{jk} - J_0 a_{jl} a_{kl} ) \pa_k v_i)
	\pa_j \pt v_i
	\\&\indeq
	+
	\int_{\Omega} 
	(\delta_{jk} - J_0 a_{jl} a_{kl})
	\pa_k \pa_{tt} v_i \pa_j \pa_{tt} v_i
	=:I_1+ I_2+I_3.
	\label{EQ119}
	\end{split}
\end{align}
For the first two terms, we integrate by parts in time to obtain
\begin{align}
\begin{split}
	&
	\int_{\tau}^{t}
	(I_1
	+
	I_2)
	\,ds
	\les
	\lambda 
	\int_{\tau}^{t} 
	\Vert \nabla v_{tt}\Vert_{L^2}
	(	\Vert \pt ((I - J_0 a) q)\Vert_{L^2}
	+
	\Vert \pt ((I - J_0 a a^T) 
	\nabla v)\Vert_{L^2}
	)\, ds
	\\&\indeq\indeq
	+
	\lambda
	\left.
	\left(
	\int_{\Omega}
	\pt ((J_0 a_{ki} - \delta_{ki}) q)
	\pa_k \pt v_i
	+
	\int_{\Omega} 
	\pt ((\delta_{jk}
	- 
	J_0 a_{jl} a_{kl}) \pa_k v_i)
	\pa_j \pt v_i
	\right)
	\right|^{t}_{\tau}.
	\label{EQ120}
\end{split}
\end{align}
From \eqref{EQ76}, \eqref{EQ81}, \eqref{EQ88}, and \eqref{EQ96}, we infer that
\begin{align}
	\begin{split}
	\Vert \pt 
	((I - J_0 a) q)\Vert_{H^1}
	\les
	 \Vert a_t \Vert_{H^2}
	\Vert  q\Vert_{H^2}
	+
	 \Vert I - J_0 a\Vert_{H^2}
	\Vert q_t \Vert_{H^1}
	\les 
	\mathcal{H} X^{1/2}
	\label{EQ121}
	\end{split}
\end{align}
and
\begin{align}
	\begin{split}
	\Vert \pt 
	((I- J_0 a a^T) \nabla v) \Vert_{L^2}
	\les
	\Vert (a a^T)_t \Vert_{H^2}
	\Vert v\Vert_{H^2}
	+
	\Vert I - J_0 a a^T\Vert_{H^2}
	\Vert \nabla v_t\Vert_{L^2}
	\les	
	\mathcal{H} X^{1/2},
		\label{EQ122}
	\end{split}
\end{align}
since $\mathcal{H} \les 1$.
The last term on the right-hand side of \eqref{EQ120} is bounded by
\begin{align}
	\begin{split}
	&\Vert \pt ((J_0 a-I) q) (t)\Vert_{L^2}
	\Vert v_t (t)\Vert_{H^1}
	+
		\Vert \pt ((J_0 a-I) q) (\tau)\Vert_{L^2}
	\Vert v_t (\tau)\Vert_{H^1}
	\\&\indeq
	+
	\Vert \pt ((I- J_0a a^T) \nabla v) (t)\Vert_{L^2}
	\Vert v_t (t)\Vert_{H^1}
		+
	\Vert \pt ((I- J_0 a a^T) \nabla v) (\tau)\Vert_{L^2}
	\Vert v_t (\tau)\Vert_{H^1}
	\\&
	\les 
	\mathcal{H} O(X),
	\label{EQ123}
	\end{split}
\end{align}
where we appealed to \eqref{EQ121} and~\eqref{EQ122}.
Combining \eqref{EQ120}--\eqref{EQ123} and the Young's inequality, we obtain
\begin{align}
	\begin{split}
		\int_{\tau}^{t}
	(I_1+I_2)
	\,ds
	\les
	\delta
	\int_{\tau}^t  \DD_{\pa_{tt}}
	\,ds
	+
	C_\delta \mathcal{H} O(X).
	\label{EQ124}	
	\end{split}
\end{align}
For the term $I_3$, we use \eqref{EQ81} to get
\begin{align}
	\begin{split}
		\int_{\tau}^{t}
	I_3 
	\,ds
	\les	
	\int_{\tau}^t
	\Vert I -J_0 a a^T\Vert_{H^2}
	\Vert \nabla \pa_{tt} v\Vert_{L^2}^2
	\,ds
	\les
	\mathcal{H}
	\int_{\tau}^{t} \DD_{\pa_{tt}} 
	\,ds.
	\label{EQ125}
	\end{split}
\end{align}

For $\L_{\partial_{tt}}$, we first note that
\begin{align}
	\begin{split}
	\Vert \pt ((I- J_0 a) \nabla v)\Vert_{H^1}	
	\les
	 \Vert a_t\Vert_{H^2}
	\Vert v\Vert_{H^3}
	+
	 \Vert I- J_0 a\Vert_{H^2}
	\Vert v_t\Vert_{H^2}
	\les
	 \mathcal{H} X^{1/2}
	\label{EQ126}
	\end{split}
\end{align}
and
\begin{align}
	\begin{split}
		\Vert \pa_{tt} ((I- J_0 a) \nabla v)\Vert_{L^2}
		&	
		\les
		\Vert I- J_0 a\Vert_{H^2}
		\Vert v_{tt}\Vert_{H^1}
		+
		\Vert a_t\Vert_{H^2}
		\Vert v_t\Vert_{H^1}
		+
		 \Vert a_{tt}\Vert_{L^2} \Vert v\Vert_{H^3}
		\\&
		\les
		\mathcal{H} \DD_{\pa_{tt}}^{1/2}
		+
		 \mathcal{H} X^{1/2},
		\label{EQ127}
	\end{split}
\end{align}
where we used \eqref{EQ81} and
\begin{align}
	\begin{split}
		\Vert a_{tt} \Vert_{H^1}
		\les	
		\Vert v_t\Vert_{H^2}
		\Vert \nabla \Psi\Vert_{H^2}
		+
		\Vert v_t\Vert_{H^2}
		+
		\Vert v\Vert_{H^3}^2
		\label{EQ128}
	\end{split}
\end{align}
due to~\eqref{EQ30}.
We integrate by parts in time and use \eqref{EQ15}, obtaining
\begin{align}
	\begin{split}
		&	\int_{\tau}^{t}
		\L_{\pa_{tt}}	
		\,ds
		=
		\lambda \int_{\tau}^{t}
		\int_{\Omega}
		\pa_{tt} q \diver \pa_{tt}\Psi
		\,ds
		=
		\lambda \int_{\tau}^{t}
		\int_{\Omega}
		\pa_{tt} q 
		\pt
		(\delta_{ki}  \pa_k v_i)
		\,ds
		\\&\indeq
		=
		\lambda \int_{\tau}^{t}
		\int_{\Omega}
		\pa_{tt} q 
		\pt
		((\delta_{ki} - J_0 a_{ki})  \pa_k v_i)
		\,ds
		\\&\indeq
		=
		\lambda 
		\int_{\Omega}
		\pa_{t} q 
		\pa_{t}
		((\delta_{ki} - J_0 a_{ki})  \pa_k v_i)
		\Big|_{\tau}^t
		-
		\lambda \int_{\tau}^{t}
		\int_{\Omega}
		\pa_{t} q 
		\pa_{tt}
		((\delta_{ki} - J_0 a_{ki})  \pa_k v_i)
		\,ds.
		\llabel{EQ129a} 
	\end{split}
\end{align}
Therefore, using \eqref{EQ96}, \eqref{EQ126}, and \eqref{EQ127}, we get
\begin{align}
	\begin{split}
	&	\int_{\tau}^{t}
	\L_{\pa_{tt}}	
	\,ds
	\les
	\Vert q_t (t)\Vert_{L^2}
	\Vert \pt ((I-J_0 a) \nabla v) (t)\Vert_{L^2}
	+
	\Vert q_t (\tau)\Vert_{L^2}
	\Vert \pt ((I-J_0 a) \nabla v) (\tau)\Vert_{L^2}
	\\&\indeq\indeq
	+
	\int_{\tau}^{t}
	\Vert q_t\Vert_{L^2}
	\Vert \pa_{tt} ((I- J_0 a) \nabla v)\Vert_{L^2}
%	\\&\indeq
	\les
	\delta 
	\sum_{\D \in  \F}
	\int_{\tau}^{t}
	\DD_{\D} \,ds
	+
	C_\delta
	\mathcal{H} 
	O(X).
	\label{EQ129} 
	\end{split}
\end{align}
Combining \eqref{EQ119}, \eqref{EQ124}, \eqref{EQ125}, and \eqref{EQ129}, we conclude the proof of~\eqref{EQ106}.

\subsection{Proof of \eqref{EQ107}}
For the term $\L_{\pt\bp}$, we use~\eqref{EQ15}, \eqref{EQ76}, \eqref{EQ81}, and~\eqref{EQ96} to obtain
\begin{align}
	\begin{split}
		\int_{\tau}^{t}
	\L_{\pt\bp}
	\,ds
	&
	=
	\lambda 
	\int_{\tau}^{t}
	\int_{\Omega}
	 \bp q_t 
	\bp \diver v
	\,ds
	=
	\lambda \int_{\tau}^{t
	} \int_{\Omega}
	\bp q_t 
	\bp ((\delta_{ki} - J_0 a_{ki}) \pa_k v_i)
	\,ds
	\\&
	\les
\int_{\tau}^{t}
	\Vert q_t\Vert_{H^1}
	\Vert I - J_0 a\Vert_{H^2} 
	\Vert v\Vert_{H^2}
	\les 
	\mathcal{H} O(X),
	\llabel{EQ130}
	\end{split}
\end{align}
and a similar argument provides the same bound for $\int_{\tau}^{t} \L_{\pt} \,ds$.
For the term $\N_{\pt \bp}$, from \eqref{EQ76} and \eqref{EQ81} it follows that
\begin{align}
	\begin{split}
	\int_{\tau}^{t}
	\N_{\pt \bp}
	\,ds
	&=
			\int_{\tau}^{t}
	\int_{\Omega}
	(\delta_{jk} - J_0 a_{jl} a_{kl} ) 
	\pa_k \pt \bp v_i 
	\pa_j \pt \bp v_i
	\,ds
		-
		\lambda
	\int_{\tau}^{t}
	\int_{\Omega} \bp J_0 \pa_{tt} v_i 
	\bp v_i
	\\&\indeq
	-
	\lambda
			\int_{\tau}^{t}
	\int_{\Omega} 
	\pt \bp
	(( J_0 a_{jl} a_{kl} - \delta_{jk})
	\partial_k v_i) \partial_j 
	\bp v_i 
	\,ds
		-
	\int_{\tau}^{t}
	\int_{\Omega} \bp J_0 \pa_{tt} v_i \bp \pt v_i
	\\&\indeq
	+
	\lambda
			\int_{\tau}^{t}
	\int_{\Omega} \pt \bp 
	((J_0 a_{ki} - \delta_{ki})
	q) \partial_k  
	\bp v_i 
	\,ds
	\\&
	\les
			\int_{\tau}^{t}
	\Vert I -J_0 a a^T\Vert_{H^2}
	\Vert v_t\Vert_{H^2}^2
	\,ds
	+
			\int_{\tau}^{t}
	\Vert v\Vert_{H^2}
	\Vert \pt ((I - J_0 a a^T) \nabla v)\Vert_{H^1}
	\,ds
	\\&\indeq
	+
			\int_{\tau}^{t}
	\Vert \pt ((I -J_0 a) q)\Vert_{H^1}
	\Vert v\Vert_{H^2}
	\,ds
	+
	\mathcal{H} \int_{\tau}^{t}
	(\DD_{\partial_{tt}}
	+
	\DD_{\bp \pt}
	+
	\DD_{\bp}
	)\,ds.
%	\\&\indeq
%	+
%	\mathcal{H} \int_{\tau}^{t}
%	\Vert \pa_{tt} v \Vert_{L^2} \Vert\Phi \Vert_{L^2}\,ds.
		\label{EQ131a}
	\end{split}
\end{align}
Using \eqref{EQ121} and the Poincar\'e inequality, we obtain
\begin{align}
	\begin{split}
	\int_{\tau}^{t}
	\N_{\pt \bp} \,ds
		\les 
	\mathcal{H} O(X)
	+
	\mathcal{H} \sum_{S\in F} 
	\int_{\tau}^{t} 
	\DD_{\D}\,ds,
		\label{EQ131}	
	\end{split}
\end{align}
where we also used the estimate
\begin{align}
	\Vert \pt ((I- J_0 a a^T) \nabla v)\Vert_{H^1}
	\les 
	\mathcal{H} X^{1/2},
	\llabel{EQ132}
\end{align}
which follows from a similar argument as in~\eqref{EQ126}.
The term $\int_{\tau}^{t} \N_{\pt}ds$ can be estimated in a similar fashion as in~\eqref{EQ131a}--\eqref{EQ131}.

Next, we estimate the terms $\int_{\tau}^{t} \N_{\D}\,ds$, where $\D \in \{\bp, \bp^2\}$.
Note that
\begin{align}
\begin{split}
	\Vert \Phi\Vert_{H^1}
	&\les
	\Vert \D \Psi (t) - \D \Psi (\tau)\Vert_{H^1}
	+
	\Vert \D \etaa (\tau)\Vert_{H^1}
	\les
	\int_{\tau}^{t}
	\Vert v\Vert_{H^3} \, ds
	+
	\Vert \etaa (\tau) \Vert_{H^3},
	\label{EQ134}
\end{split}
\end{align}
where we used the Fundamental Theorem of Calculus in the last step.
Denote 
\begin{align}
	\N_{\D}:= 
	\N_{\D,1} +\N_{\D,2},
	\llabel{EQ133a}
\end{align}
where $\N_{\D,1}$ is the sum of the first three terms and $\N_{\D,2}$ is the sum of the last two terms in~\eqref{EQ73}. 
Using \eqref{EQ81}, \eqref{EQ88}, and \eqref{EQ134}, we obtain
\begin{align}
	\begin{split}
		\int_{\tau}^{t}
	\N_{\D,1}	
	\,ds
	&
	=
			\int_{\tau}^{t}
	\int_{\Omega}
	(\delta_{jk} - J_0 a_{jl} a_{kl} ) 
	\pa_k \D v_i \pa_j \D v_i
	\,ds
	-
	\lambda
			\int_{\tau}^{t}
	\int_{\Omega} \D 
	((J_0 a_{jl} a_{kl} - \delta_{jk})
	\partial_k v_i) \partial_j  \Phi_i 
	\,ds
	\\&\indeq
	+
	\lambda
			\int_{\tau}^{t}
	\int_{\Omega} \D 
	((J_0 a_{ki} - \delta_{ki})
	q) \partial_k  \Phi_i 
	\,ds
	\\&
	\les
	\int_{\tau}^{t}
	\Vert I - J_0 a a^T\Vert_{H^2}
	\Vert v\Vert_{H^3}^2
	\,ds
	+
	\int_{\tau}^{t}
	\Vert I- J_0 a a^T \Vert_{H^2} 
	\Vert v\Vert_{H^3}
	\Vert \Phi\Vert_{H^1}
	\,ds
	\\&\indeq
	+
	\int_{\tau}^{t}
	\Vert I - J_0 a \Vert_{H^2}
	\Vert
	q\Vert_{H^2}
	\Vert \Phi\Vert_{H^1}
	\,ds
	\\&
	\les
	\mathcal{H} 
	O(X),
	\llabel{EQ133}
	\end{split}
\end{align}
for each $\D \in \{\bp, \bp^2\}$.
For the term $\N_{\bp^2,2}$, we have
\begin{align}
	\begin{split}
	\int_{\tau}^{t}
	\N_{\bp^2,2}
	\,ds	
	&=	
	-2\int_{\tau}^{t}
	\int_{\Omega}
	\bp J_0 \pt \bp v_i \bp^2 v_i
	-
	\int_{\tau}^{t}
	\int_{\Omega}
	\bp^2 J_0 \pt v_i \bp^2 v_i
		\\&\indeq
		-2
	\lambda
	\int_{\tau}^{t}
	\int_{\Omega}
	\bp J_0 \pt \bp v_i \Phi_i
	-
		\lambda
	\int_{\tau}^{t}
	\int_{\Omega}
	\bp^2 J_0 \pt v_i \Phi_i
	\\&
	\les
	\mathcal{H}
	\int_{\tau}^{t} (\DD_{\bp \pt} + \DD_{\bp^2}
	+\DD_{\pt}
	)\,ds
	+
	\mathcal{H} O(X)
	.
	\label{EQ134b}
	\end{split}
\end{align}
The term $\N_{\bp,2}$ can be estimated in a similar fashion as in~\eqref{EQ134b}.

Finally, it remains to estimate the commutator terms $\int_{\tau}^{t} \C_{\D}\,ds$ for $\D\in \F$.
From \eqref{EQ74}, we note that for each term in $\C_{\D}$ at least one derivative of $\D$ falls on~$J_0 a$ or~$a$. 
Therefore, using \eqref{EQ76}, \eqref{EQ81}, \eqref{EQ88}, and \eqref{EQ96}, we obtain
\begin{align}
	\sum_{\D \in \{\bp, \pt, \pt\bp\}}
	\int_{\tau}^{t}
	\C_{\D}
	\,ds
	\les 
	\mathcal{H} O(X).
	\label{EQ135}
\end{align}
For the case $\D= \pa_{tt}$, the term $\C_{\pa_{tt},1}$ can be estimated similarly to~\eqref{EQ135} using~\eqref{EQ128}. 
For the term $\C_{\pa_{tt},2}$, we integrate by parts in time to obtain
\begin{align}
	\begin{split}
	&\int_{\tau}^{t}
	\C_{\pa_{tt},2}
	\,ds
	=
	\int_{\tau}^{t}
	\int_{\Omega}
	(J_0 a_{ki} \pa_k \pa_{tt} v_i -
	\pa_{tt} (J_0 a_{ki} \pa_k v_i)
	)	
	\pa_{tt} q
	\,ds
	\\&\indeq
	=
	\left.
	\int_{\Omega}
	(J_0 a_{ki} \pa_k \pa_{tt} v_i -
	\pa_{tt} (J_0 a_{ki} \pa_k v_i)
	)	
	\pa_{t} q
	\right|_{\tau}^t
	-
	\int_{\tau}^{t}
	\int_{\Omega}
	\pt
	(J_0 a_{ki} \pa_k \pa_{tt} v_i -
	\pa_{tt} ( J_0  a_{ki} \pa_k v_i)
	)	
	\pa_{t} q
	\,ds
	\\&\indeq
 	=: K_1+K_2.
 	\llabel{EQ136}
	\end{split}
\end{align}
For the term $K_1$, we have
\begin{align}
	\begin{split}
	K_1
	&=
	-
	2\left.
	\int_{\Omega}
	J_0 
	\pa_{t} a_{ki} \pt \pa_k v_i \pt q
	\right|^t_{\tau}
	-	
	\left.
	\int_{\Omega}
	J_0 
	\pa_{tt} a_{ki}  \pa_k v_i \pt q
	\right|^t_{\tau}
	\les \mathcal{H} O(X),
	\label{EQ137}
	\end{split}
\end{align} 
where we used \eqref{EQ81} and~\eqref{EQ128}.
For the term $K_{2}$, all the terms can be estimated in a similar fashion as in \eqref{EQ137}, except for the terms when $v$ receive two time derivatives and one spatial derivative.
For such terms, we use the Young's inequality to get
\begin{align}
	\begin{split}
	\left|
	\int_{\tau}^t
	\int_{\Omega} \nabla \pa_{tt}
	v \nabla v \pa_t q
	\,ds
	\right|	
	\les
	\delta \int_{\tau}^{t}
	\DD_{\pa_{tt}}
	\,ds
	+
	C_\delta
	\mathcal{H} O(X).
	\label{EQ138}
	\end{split}
\end{align}
For the term $\C_{\pa_{tt},3}$, we proceed analogously as in \eqref{EQ138} to obtain the same bound.
For the case when $\D = \bp^2$, we use the same strategy to bound the terms in $\C_{\bp^2}$, except for when two tangential derivatives are falling on $a$ in first term in \eqref{EQ74}, since we have no control over~$\Vert a\Vert_{H^3}$. 
Due to the Piola identity \eqref{EQ31}, that term vanishes. 
Therefore, we get
\begin{align}
	\int_{\tau}^{t}
	\C_{\bp^2}
	\,ds
	\les 
	\mathcal{H} O(X).
	\llabel{EQ139}
\end{align}
The proof of \eqref{EQ107} is then concluded.
\colb

\appendix
\section{An ODE-type lemma}
Here we recall an ODE inequality from \cite{KO1} needed in the
conclusion of the proof of the main theorem.

\colb
\begin{Lemma}
	\cite{KO1}
	\label{LODE}
	Given constants $C\geq 1$, $\gamma \in (0,1]$, and $\lambda \in (0,1/500C^2]$, there exists $\epsilon>0$ with the following property.
	Suppose $f\colon [0,\infty) \to [0,\infty)$ is a continuous function satisfying
	\begin{align}
		f(t)+\lambda \int_{\tau}^{t} f(s)\,ds
		\leq 
		C (1+\lambda^2 (t-\tau)) f(\tau)
		+ 
		C\lambda^2 \int_{\tau}^{t} f(s)\,ds
		+ h(t) O(f),
		\llabel{EQ140}
	\end{align}
for any $t>0$ and $\tau \in [0,t]$, provided that
	\begin{align}
		h(t):=
		\sup_{s\in [0,t]} 
		\left(
		f^{1/2}(s)
		+ 
		\int_{0}^{s} f^{1/2}
		\right)
		\leq \gamma,
		\label{EQ141}
	\end{align}
	where $O(f)$ denotes any term involving powers of $\lambda$, $C$, and $(t-\tau)$, and at least two factors chosen from $\{f(\tau)^{1/2}, f(t)^{1/2}, \int_{\tau}^{t} f^{1/2}(s)\,ds \}$ or one factor in the form $\int_{\tau}^t f(s)\,ds$.
	Then, the condition $f(0)\leq \epsilon$ implies \eqref{EQ141} and
	\begin{align}
		f(t) 
		\leq 
		30C f(0) e^{-t \lambda/2C}
		\comma t\geq 0.
		\llabel{EQ142}
	\end{align}
\end{Lemma}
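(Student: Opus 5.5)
The plan is a continuity argument on the smallness condition \eqref{EQ141}, combined with a Gronwall-type argument on the tail integral of $f$, which is what produces the rate $\lambda/2C$ and the constant $30C$.

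\textbf{Step 1: bootstrap.} Let $T^*$ be the supremum of times $t$ with $h(t)\le\gamma'$, where $\gamma'\le\gamma$ is a small constant to be fixed. If $f(0)\le\epsilon$, then $h(0)=f(0)^{1/2}<\gamma'$, so $T^*>0$ by continuity. It suffices to show that on $[0,T^*)$ the decay bound holds with $h(t)\le\gamma'/2$; this forces $T^*=\infty$, and then \eqref{EQ141} holds.

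\textbf{Step 2: absorb the $h\,O(f)$ terms.} On $[0,T^*)$ every term of $O(f)$ is dominated by one of $f(\tau)$, $f(t)$, $(\int_\tau^t f^{1/2})^2\le (t-\tau)\int_\tau^t f$, or $\int_\tau^t f$, times powers of $\lambda$, $C$ and $(t-\tau)$; the products use Cauchy--Schwarz and Young. I restrict to windows $t-\tau\le L$ with $L:=1/\lambda$, so that all factors $(t-\tau)^k$ are bounded by constants depending on $\lambda$. Choosing $\gamma'$ small relative to these constants, $h\,O(f)\le \theta\bigl(f(\tau)+f(t)+\lambda\int_\tau^t f\bigr)$ with $\theta$ tiny. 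Also $C\lambda^2\int f\le (\lambda/500C)\int f$ and $\lambda^2(t-\tau)\le\lambda$. Absorbing gives, for $0\le t-\tau\le L$,
\begin{equation*}
f(t)+\lambda'\int_\tau^t f\,ds\le C' f(\tau),\qquad \lambda'=\lambda(1-\tfrac1{500C}-\theta),\quad C'=\frac{C(1+\lambda)+\theta}{1-\theta}.
\end{equation*}
These constants satisfy $\lambda'/C'\ge \lambda/(2C)\cdot(1+\kappa)$ for some margin $\kappa>0$, since $\lambda\le 1/500$.

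\textbf{Step 3: tail Gronwall.} Define $G(\tau)=\int_\tau^{\tau+L} f$. Splitting $[\tau,\tau+L]$ and applying the windowed inequality from the left endpoint gives $\lambda' G(\tau)\le C' f(\tau)$. Since $G'(\tau)=f(\tau+L)-f(\tau)$ and $f(\tau+L)\le C'f(\tau)$ is comparable, I do not differentiate $G$ directly. Instead I iterate over the windows $[kL,(k+1)L]$ as follows. In each window the average of $f$ is at most $C'f(kL)/(\lambda' L)$, so some point $s_k$ in the second half of the window satisfies $f(s_k)\le 2C' f(kL)/(\lambda' L)$. Then $f\le C' f(s_k)$ on the next window. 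Choosing the window length $L=aC'/\lambda'$ with $a$ optimized, in place of $1/\lambda$, turns this into geometric decay with per-unit-time rate $\lambda'/C'$ up to a loss that the margin $\kappa$ absorbs. This gives $f(t)\le 30C f(0)e^{-t\lambda/2C}$ on $[0,T^*)$, where the factor $30C$ collects $C'$ from the first window and the $e$-type loss from the averaging step.

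\textbf{Step 4: close the bootstrap.} From the decay, $h(t)\le (30Cf(0))^{1/2}\bigl(1+\int_0^\infty e^{-s\lambda/4C}ds\bigr)\le C_\lambda\,\epsilon^{1/2}$. This is at most $\gamma'/2$ for $\epsilon$ small depending on $C$, $\gamma$ and $\lambda$, which completes Step 1.

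\textbf{Main obstacle.} The hardest part is Step 3: reaching the exact rate $\lambda/2C$ rather than a rate like $c\lambda/C$ with a poor constant $c$. The naive halving argument loses a factor $\ln 2/8$ in the rate. My first attempt is to run the genuine tail argument on $G_\infty(\tau)=\int_\tau^\infty f$, which gives $\lambda' G_\infty\le C'(-G_\infty')$ and hence $G_\infty(\tau)\le G_\infty(0)e^{-\lambda'\tau/C'}$. I would justify this version with a preliminary crude exponential bound from the halving argument, which makes $G_\infty$ finite, and I would handle the $\lambda^2(t-\tau)$ growth by noting that it only enters through $t-\tau\le C'/\lambda'$ once the decay is known. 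Averaging $f(t)\le C'f(\tau)$ over $\tau\in[t-C'/\lambda',t]$ then yields $f(t)\le C'e\,f(0)e^{-\lambda' t/C'}\le 30Cf(0)e^{-t\lambda/2C}$.
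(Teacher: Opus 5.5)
\textbf{Gap in Step 3.} The paper does not prove this lemma; it quotes it from \cite{KO1}, so your argument has to stand on its own. Steps 1, 2 and 4 are sound: the bootstrap on $h\le\gamma'$, the absorption of $h\,O(f)$ on windows $t-\tau\le L$ with $\gamma'$ depending on $L$, and the closing bound $h\lesssim_{C,\lambda}\epsilon^{1/2}$. The gap is Step 3, which is exactly where $30C$ and the rate $\lambda/2C$ must come from.

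Selecting a point where $f$ lies below its window average cannot give that rate for any choice of $a$. Even if you restart each window at the selected point, one step only gives $f(s)\le a^{-1}f(\sigma)$ after a time of up to $L=aC'/\lambda'$. So the guaranteed rate is at most $\frac{\lambda'}{C'}\max_a\frac{\ln a}{a}=\frac{\lambda'}{eC'}$, and half of that if $s$ is taken in the second half. Anchoring the windows at $kL$, as you wrote, loses a further factor $C'$ per step. Since $\lambda'\le\lambda$ and $C'\ge C$, your margin satisfies $1+\kappa\le 2$, which cannot absorb a factor $e$.

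The fallback does not close the gap either. The inequality $\lambda' G_\infty\le -C'G_\infty'$ is the statement $\lambda'\int_\tau^\infty f\le C'f(\tau)$, which is the hypothesis with $t=\infty$. There it is void, because of the term $C\lambda^2(t-\tau)f(\tau)$ and the powers of $t-\tau$ inside $h\,O(f)$. Finiteness of $G_\infty$ is beside the point. What you would need is that $\int_{\tau+L}^\infty f$ be a small fraction $\theta$ of $\frac{C'}{\lambda'}f(\tau)$. A crude bound $f(s)\lesssim C f(\tau)e^{-c\lambda(s-\tau)/C}$ gives this only for $L$ of order $\frac{C}{\lambda}\log\frac{C}{\theta}$, not for $t-\tau\le C'/\lambda'$. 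That variant can be made to work, since $\lambda^2L$ stays small thanks to $\lambda\le 1/500C^2$, but it is not the argument you give.

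\textbf{A direct repair.} Keep the right endpoint of the window fixed. For $\tau\in[\sigma,\sigma+L]$, your windowed inequality on $[\tau,\sigma+L]$ shows that $G(\tau)=\int_\tau^{\sigma+L}f$ satisfies $\lambda'G\le C'f=-C'G'$. Hence $\int_{\sigma+L/2}^{\sigma+L}f\le \frac{C'}{\lambda'}e^{-\lambda'L/2C'}f(\sigma)$, and some $s\in[\sigma+L/2,\sigma+L]$ satisfies $f(s)\le \frac{2}{y}e^{-y/2}f(\sigma)$ with $y=\lambda'L/C'$.

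Take $L=4C/\lambda$. Then $\lambda^2L=4C\lambda\le 1/(125C)$, so $C'$ is close to $C$ and $\lambda'$ is close to $\lambda$. For $\theta$ small this gives $y\ge 3.4$, and therefore $f(s)\le \frac19 f(\sigma)$.

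Iterate from $s_0=0$ to get points with $s_{k+1}-s_k\in[L/2,L]$ and $f(s_k)\le 9^{-k}f(0)$. For $t\in[s_k,s_{k+1})$ you have $t-s_k<L$, so $f(t)\le C'f(s_k)$. Since $k>t/L-1$, this gives $f(t)\le 9C'f(0)e^{-t\ln 9/L}$. Because $\frac{\ln 9}{4}>\frac12$ and $9C'\le 30C$, this is at most $30Cf(0)e^{-\lambda t/2C}$.

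This replaces both the averaging iteration and the $G_\infty$ argument, and your Step 4 then closes the bootstrap as described.
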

\colb

\colb
\section*{Acknowledgments}
MB and BM were supported by the Croatian Science Foundation under project IP-2022-10-2962, while
IK was supported in part by the NSF grant DMS-2205493 and Simons grant SFI-MPS-TSM-00025796. BM was also supported by: the European Union Next Generation EU through the National Recovery and Resilience Plan 2021--2026; an institutional grant from the University of Zagreb, Faculty of Science: project IK IA 1.1.3. Impact4Math. MB was also partly supported by the NextGenerationEU framework through the project "DEEPWAVE" at the University of Zagreb Faculty of Electrical Engineering and Computing.

\colb

\end{document}